
\documentclass[12pt]{article}
\title{Ensemble estimators for multivariate \\entropy estimation}
\pdfoutput=1
\author{\textbf{Kumar Sricharan}, \textbf{Dennis Wei}, \textbf{Alfred O. Hero III}
\\ \{\textbf{kksreddy,dlwei,hero}\}@umich.edu \\
 Department of EECS, University of Michigan Ann Arbor
}

\date{\today}

\usepackage[OT1]{fontenc}
\usepackage{amsthm,amsmath}
\usepackage[numbers]{natbib}

\newcommand{\expect}{{{\mathbb{E}}}}
\newcommand{\mb}{\mathbf}
\newcommand{\var}{{{\mathbb{V}}}}

\usepackage[hmargin=2.5cm,vmargin=3cm]{geometry}
\usepackage{latexsym,amssymb,amsmath}
\usepackage{amsmath, amsthm, amssymb}
\usepackage{graphicx}
\usepackage[mathcal]{euscript}
\usepackage{algorithm}
\usepackage{algorithmic}
\usepackage{appendix}
\usepackage{subfigure}
\usepackage{mathrsfs}

\numberwithin{equation}{section}
\theoremstyle{plain}

\newtheorem{theorem}{Theorem}
\newtheorem{lemma}[theorem]{Lemma}

\begin{document}
\maketitle

\begin{abstract}
The problem of estimation of density functionals like entropy and mutual information has received much attention in the statistics and  information theory communities. A large class of estimators of functionals of the probability density suffer from the curse of dimensionality, wherein the mean squared error (MSE) decays increasingly slowly as a function of the sample size $T$ as the dimension $d$ of the samples increases. In particular, the rate is often  glacially slow of order  $O(T^{-{\gamma}/{d}})$, where $\gamma>0$ is a rate parameter. Examples of such estimators include kernel density estimators, $k$-nearest neighbor ($k$-NN)  density estimators, $k$-NN entropy estimators, intrinsic dimension estimators and other examples. In this paper, we propose a weighted affine combination of an ensemble of such estimators, where optimal weights can be chosen such that the weighted estimator converges at a much faster dimension invariant rate of $O(T^{-1})$. Furthermore, we show that these optimal weights can be determined by solving a convex optimization problem which can be performed offline and does not require training data. We illustrate the superior performance of our weighted estimator for two important applications: (i) estimating the Panter-Dite distortion-rate factor and (ii) estimating the Shannon entropy for testing the probability distribution of a random sample.
\end{abstract}

\section{Introduction}

Non-linear functionals of probability densities $f$ of the form $G(f) = \int g(f(x),x) f(x) dx$ arise in applications of information theory, machine learning, signal processing and statistical estimation. Important examples of such functionals include Shannon $g(f,x)=-\log(f)$ and R\'enyi $g(f,x) = f^{\alpha-1}$ entropy, and the quadratic functional $g(f,x) = f^{2}$. In these applications, the functional of interest often must be estimated empirically from sample realizations of the underlying densities. 

Functional estimation has received significant attention in the mathematical statistics community. However, estimators of functionals of multivariate probability densities $f$ suffer from mean square error (MSE) rates which typically decrease with dimension $d$ of the sample as $O(T^{-{\gamma}/{d}})$, where $T$ is the number of samples and $\gamma$ is a positive rate parameter. Examples of such estimators include kernel density estimators~\cite{kde}, $k$-nearest neighbor ($k$-NN) density estimators~\cite{fuk2}, $k$-NN entropy functional estimators~\cite{hero,kks,litt}, intrinsic dimension estimators~\cite{kks}, divergence estimators~\cite{wang}, and mutual information estimators. This slow convergence is due to the curse of dimensionality. In this paper, we introduce a simple affine combination of an ensemble of such slowly convergent estimators and show that the weights in this combination can be chosen to significantly improve the rate of MSE convergence of the weighted estimator.  In fact our ensemble averaging method can improve MSE convergence to the parametric rate $O(T^{-1})$.

Specifically, for $d$-dimensional data, it has been observed that the variance of estimators of functionals $G(f)$ decays as $O(T^{-1})$ while the bias decays as $O(T^{-1/(1+d)})$. To accelerate the slow rate of convergence of the bias in high dimensions, we propose a weighted ensemble estimator for ensembles of estimators that satisfy conditions ${\mathscr C}.1$(\ref{BE}) and ${\mathscr C}.2$(\ref{VE}) defined in Sec. II below. Optimal weights, which serve to lower the bias of the ensemble estimator to $O(T^{-1/2})$, can be determined by solving a convex optimization problem. Remarkably, this optimization problem does not involve any density-dependent parameters and can therefore be performed offline. This then ensures MSE convergence of the weighted estimator at the parametric rate of $O(T^{-1})$. 

\subsection{Related work}

When the density $f$ is $s > d/4$ times differentiable, certain estimators of functionals of the form $\int g(f(x),x) f(x) dx$, proposed by Birge and Massart~\cite{birge}, Laurent~\cite{laurent} and  Gin\'e and Mason~\cite{gini}, can achieve the parametric MSE convergence rate of $O(T^{-1})$. The key ideas in ~\cite{birge,laurent,gini} are: (i) estimation of quadratic functionals $\int f^2(x) dx$ with MSE convergence rate $O(T^{-1})$; (ii) use of kernel density estimators with kernels that satisfy the following symmetry constraints: 
\begin{equation}
\int K(x) dx =1, \hspace{0.5in} \int x^r K(x) dx = 0, 
\label{eq:symmetrickernels}
\end{equation} for $r=1,..,s$; and finally (iii) truncating the kernel density estimate so that it is bounded away from $0$. By using these ideas, the estimators proposed by ~\cite{birge,laurent,gini} are able to achieve parametric convergence rates.

In contrast, the estimators proposed in this paper require additional higher order smoothness conditions on the density, i.~e.~ the density must be $s>d$ times differentiable. However, our estimators are much simpler to implement in contrast to the estimators proposed in ~\cite{birge,laurent,gini}. In particular, the estimators in ~\cite{birge,laurent,gini} require separately estimating quadratic functionals of the form $\int f^2(x) dx$, and using truncated kernel density estimators with symmetric kernels (\ref{eq:symmetrickernels}), conditions that are not required in this paper. Our estimator is a simple affine combination of an ensemble of estimators, where the ensemble satisfies conditions ${\mathscr C}.1$ and ${\mathscr C}.2$. Such an ensemble can be trivial to implement. For instance, in this paper we show that simple uniform kernel plug-in estimators (\ref{eq:plugin}) satisfy conditions ${\mathscr C}.1$ and ${\mathscr C}.2$. 

Ensemble based methods have been previously proposed in the context of classification. For example, in both boosting~\cite{boosting} and multiple kernel learning~\cite{multipleK} algorithms, lower complexity weak learners are combined to produce classifiers with higher accuracy. Our work differs from these methods in several ways. First and foremost, our proposed method performs estimation rather than classification. An important consequence of this is that the weights we use are {\emph {data independent}}, while the weights in boosting and multiple kernel learning must be estimated from training data since they depend on the unknown distribution. 

\subsection{Organization}
The remainder of the paper is organized as follows. We formally describe the weighted ensemble estimator for a general ensemble of estimators in Section~\ref{sec:genmeth}, and specify conditions ${\mathscr C}.1$ and ${\mathscr C}.2$ on the ensemble that ensure that the ensemble estimator has a faster rate of MSE convergence. Under the assumption that conditions ${\mathscr C}.1$ and ${\mathscr C}.2$ are satisfied, we provide an MSE optimal set of weights as the solution to a  convex optimization(\ref{convexsoltheory}). Next, we shift the focus to entropy estimation in Section~\ref{sec:entropyest}, propose an ensemble of simple uniform kernel plug-in entropy estimators, and show that this ensemble satisfies conditions ${\mathscr C}.1$ and ${\mathscr C}.2$. Subsequently, we apply the ensemble estimator theory in Section~\ref{sec:genmeth} to the problem of entropy estimation using this ensemble of kernel plug-in estimators. We present simulation results in Section~\ref{sec:exp} that illustrate the superior performance of this ensemble entropy estimator in the context of (i) estimation of the Panter-Dite distortion-rate factor~\cite{riten} and (ii) testing the probability distribution of a random sample. We conclude the paper in Section~\ref{sec:con}.

\subsection*{Notation}
We will use bold face type to indicate random variables and random vectors and regular type face for constants. We denote the statistical expectation operator by the symbol $\expect$ and the conditional expectation given random variable $\mb{Z}$ using the notation $\expect_{\mb{Z}}$. We also define the variance operator as $\var[\mb{X}] = \expect[(\mb{X}-\expect[\mb{X}])^2] $ and the covariance operator as $\mathrm{Cov}[\mb{X},\mb{Y}] = \expect[(\mb{X}-\expect[\mb{X}])(\mb{Y}-\expect[\mb{Y}])] $. We denote the bias of an estimator by $\mathbb{B}$.

\section{Ensemble estimators}
\label{sec:problem}
\label{sec:genmeth}

Let $\bar{l} = \{l_1,..,l_L\}$ denote a set of parameter values. For a parameterized ensemble of estimators $\{\hat{\mb{E}}_l\}_{l \in \bar{l}}$ of $E$, define the weighted ensemble estimator with respect to weights $w = \{w(l_1),\ldots,w(l_L)\}$ as $$\hat{\mb{E}}_w = \sum_{l \in \bar{l}} w(l)\hat{\mb{E}}_l$$ where the weights satisfy $\sum_{l \in \bar{l}} w(l) = 1$. This latter sum-to-one condition guarantees that $\hat{\mb{E}}_w$ is asymptotically unbiased if the component estimators $\{\hat{\mb{E}}_l\}_{l \in \bar{l}}$ are asymptotically unbiased. Let this ensemble of estimators $\{\hat{\mb{E}}_l\}_{l \in \bar{l}}$ satisfy the following two conditions:
\begin{itemize}
 \item ${\mathscr C}.1$ The bias is given by 
\begin{eqnarray}
\label{Bias_Ensemble}
\label{BE}
\mathbb{B}({\hat{\mb{E}}}_{l}) &=& \sum_{i \in {\cal I}} c_{i}\psi_i(l)T^{-{i/2d}} + O(1/\sqrt{T}),
\end{eqnarray}
where $c_{i}$ are constants that depend on the underlying density, ${\cal I} = \{i_1,..,i_I\}$ is a finite index set with cardinality $I<L$, $\min({\cal I}) = i_0 > 0$ and $\max({\cal I}) = i_d \leq d$, and $\psi_i(l)$ are basis functions that depend only on the estimator parameter $l$.

\item ${\mathscr C}.2$ The variance is given by 
\begin{eqnarray}
\label{Variance_Ensemble}
\label{VE}
\mathbb{V}({\hat{\mb{E}}}_{l}) &=& c_{v}\left({\frac{1}{T}}\right) + o\left(\frac{1}{T}\right).
\end{eqnarray}

\end{itemize}

\begin{theorem}
\label{lemma:weightedensemble}
For an ensemble of estimators $\{\hat{\mb{E}}_l\}_{l \in \bar{l}}$, assume that the conditions ${\mathscr C}.1$ and ${\mathscr C}.2$ hold. Then, there exists a weight vector $w_o$ such that
$$\expect[(\hat{\mb{E}}_{w_o} - E)^2]  = O(1/T).$$ This weight vector can be found by solving the following convex optimization problem:
\begin{equation}
\begin{aligned}
& \underset{w}{\text{minimize}}
& & ||w||_2 \\
& \text{subject to}
& & \sum_{l \in \bar{l}} w(l) = 1, \\
&&& \gamma_w(i) = \sum_{l \in \bar{l}} w(l)\psi_i(l) = 0, \; i \in {\cal I},
\end{aligned}
\label{convexsoltheory}
\end{equation}
where $\psi_i(l)$ is the basis defined in (2.1).
\end{theorem}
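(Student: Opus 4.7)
My plan is to start from the standard bias--variance decomposition $\expect[(\hat{\mb{E}}_w - E)^2] = \mathbb{B}(\hat{\mb{E}}_w)^2 + \mathbb{V}(\hat{\mb{E}}_w)$ and show that the linear equality constraints of (\ref{convexsoltheory}) drive the bias to $O(T^{-1/2})$, while the $\ell_2$ objective keeps the variance at the parametric rate $O(1/T)$. First, I would use linearity of expectation together with $\mathscr{C}.1$ to write
\begin{equation*}
\mathbb{B}(\hat{\mb{E}}_w) = \sum_{l \in \bar{l}} w(l)\,\mathbb{B}(\hat{\mb{E}}_l) = \sum_{i \in {\cal I}} c_i\, \gamma_w(i)\, T^{-i/(2d)} + O(T^{-1/2}),
\end{equation*}
where the residual inherits a factor $\sum_{l}|w(l)| \leq \sqrt{L}\,\|w\|_2$ from the termwise $O(T^{-1/2})$ tails. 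The constraints $\gamma_w(i) = 0$ for every $i \in {\cal I}$ then annihilate all bias contributions that decay slower than $T^{-1/2}$, leaving $\mathbb{B}(\hat{\mb{E}}_w) = O(T^{-1/2})$ and hence $\mathbb{B}(\hat{\mb{E}}_w)^2 = O(T^{-1})$.

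Next I would bound the variance by applying Cauchy--Schwarz to the covariance form, together with $\mathscr{C}.2$:
\begin{equation*}
\mathbb{V}(\hat{\mb{E}}_w) = \sum_{l,l' \in \bar{l}} w(l)\,w(l')\,\mathrm{Cov}(\hat{\mb{E}}_l, \hat{\mb{E}}_{l'}) \leq \Bigl(\sum_{l \in \bar{l}} |w(l)|\Bigr)^{\!2} \max_{l \in \bar{l}} \mathbb{V}(\hat{\mb{E}}_l) \leq L\,\|w\|_2^2 \Bigl(\tfrac{c_v}{T} + o(T^{-1})\Bigr),
\end{equation*}
using $\|w\|_1 \leq \sqrt{L}\,\|w\|_2$. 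Combining with the previous paragraph gives $\expect[(\hat{\mb{E}}_w - E)^2] = O(T^{-1})$ whenever $\|w\|_2$ is bounded independently of $T$; this is exactly what the objective in (\ref{convexsoltheory}) enforces, since any feasible minimizer has a $T$-independent norm determined purely by the $\psi_i(l)$'s.

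Finally I would verify that the convex program is well posed. Its feasible set is the intersection of $I+1$ affine hyperplanes in $\mathbb{R}^L$; because $I < L$ by $\mathscr{C}.1$, feasibility reduces to the vector $(1,0,\ldots,0)^\top \in \mathbb{R}^{I+1}$ lying in the column span of the matrix with columns $(1,\psi_{i_1}(l),\ldots,\psi_{i_I}(l))^\top$, $l\in \bar{l}$. Given feasibility, minimizing the strictly convex function $\|w\|_2$ over a closed affine set yields a unique minimizer $w_o$, whose norm then enters the variance bound as an $O(1)$ constant and closes the argument. I expect this feasibility/rank condition to be the main obstacle when applying the theorem, since the bias--variance bookkeeping and the convexity of (\ref{convexsoltheory}) are essentially routine under $\mathscr{C}.1$ and $\mathscr{C}.2$. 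The feasibility check itself is deferred to the specific entropy ensemble of Section~\ref{sec:entropyest}, where the $\psi_i(l)$ will turn out to be explicit powers of $l$ and nonsingularity will follow from a Vandermonde-type argument.
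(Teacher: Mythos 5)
Your proposal is correct and follows essentially the same route as the paper: the same bias expansion with the $O(\|w\|_1/\sqrt{T}) = O(\sqrt{L}\,\|w\|_2/\sqrt{T})$ residual, a Cauchy--Schwarz covariance bound giving $\mathbb{V}(\hat{\mb{E}}_w) \leq L\|w\|_2^2\, c_v/T$ (the paper phrases this via the trace of the normalized covariance matrix, which is the same estimate), and feasibility of the affine constraints from $L > I$ with linear independence of the $\psi_i$ rows deferred, exactly as the paper defers it. The only cosmetic difference is that the paper records the minimum squared norm explicitly as $\eta_L(d) = \det(A_1A_1')/\det(A_0A_0')$, whereas you simply note it is a $T$-independent constant; both suffice.
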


\begin{proof}

The bias of the ensemble estimator is given by
\begin{eqnarray}
\label{Bias_EnsembleEstimate}
\mathbb{B}({\hat{\mb{E}}}_{w}) &=& \sum_{i \in {\cal I}} c_{i}\gamma_w(i)T^{-{i/2d}} + O\left(\frac{||w||_1}{\sqrt{T}}\right) \nonumber \\
&=& \sum_{i \in {\cal I}} c_{i}\gamma_w(i)T^{-{i/2d}} + O\left(\frac{\sqrt{L}||w||_2}{\sqrt{T}}\right) 
\end{eqnarray}


Denote the covariance matrix of $\{\hat{\mb{E}}_l; l \in \bar{l}\}$ by $\Sigma_L$. Let $\bar{\Sigma}_L = \Sigma_LT$. Observe that by (\ref{Variance_Ensemble}) and the Cauchy-Schwarz inequality, the entries of $\bar{\Sigma}_L$ are $O(1)$. The variance of the weighted estimator $\hat{\mb{E}}_w$ can then be bounded as follows:
\begin{eqnarray}
\label{eq:var}
\var(\hat{\mb{E}}_w) &=& \var(\sum_{l \in \bar{l}} w_l\hat{\mb{E}}_l) =  w' \Sigma_L w = \frac{w' \bar{\Sigma}_L w}{T}\nonumber \\
&\leq& \frac{\lambda_{\max}(\bar{\Sigma}_L)||w||^2_2}{T} \leq \frac{trace(\bar{\Sigma}_L)||w||^2_2}{T} \leq \frac{L||w||^2_2}{T} 
\end{eqnarray}

We seek a weight vector $w$ that (i) ensures that the bias of the weighted estimator is $O(T^{-1/2})$ and (ii) has low $\ell_2$ norm $||w||_2$ in order to limit the contribution of the variance, and the higher order bias terms of the weighted estimator.  To this end, let $w_{o}$ be the solution to the convex optimization problem defined in (2.3). The solution $w_o$ is the solution of 
\begin{equation}
\begin{aligned}
& \underset{w}{\text{minimize}}
& & ||w||^2_2 \\
& \text{subject to}
& & A_0w = b, 
\end{aligned}
\nonumber
\label{convexsol2}
\end{equation}
where $A_0$ and $b$ are defined below. Let $a_0$ be the vector of ones: $[1,1. .. ,1]_{1 \times L}$; and let $a_{i}$, for each $i \in \cal{I}$ be given by $a_{i} = [\psi_i(l_1), .. ,\psi_i(l_L)]$. Define $A_0 = [a'_0, a'_{i_1}, ... , a'_{i_I}]'$, $A_1 = [a'_{i_1}, ... , a'_{i_I}]'$ and $b = [1;0;0;..;0]_{(I+1) \times 1}$. 

 Since $L > I$, the system of equations $A_0 w = b$ is guaranteed to have at least one solution (assuming linear independence of the rows $a_i$).  The minimum squared norm $\eta_L(d) := ||w_0||_2^2$ is then given by
$$\eta_L(d) = {\frac{\text{det}(A_1A'_1)}{\text{det}(A_0A'_0)}}.$$ 

Consequently, by (\ref{Bias_EnsembleEstimate}), the bias $\mathbb{B}[\hat{\mb{E}}_{w_o}] = O(\sqrt{L\eta_L(d)}/\sqrt{T})$. By (\ref{eq:var}), the estimator variance $\var[\hat{\mb{E}}_{w_0}] = O(L\eta_L(d)/T)$. The overall MSE is also therefore of order $O(L\eta_L(d)/T)$.

For any fixed dimension $d$ and fixed number of estimators $L>I$ in the ensemble independent of sample size $T$, the value of $\eta_L(d)$ is also independent of $T$. Stated mathematically, $L\eta_L(d) = \Theta(1)$ for any fixed dimension $d$ and fixed number of estimators $L>I$ independent of sample size $T$. This concludes the proof.

\end{proof}

In the next section, we will verify conditions $\mathscr C.1$(\ref{BE}) and $\mathscr C.2$(\ref{VE}) for plug-in estimators $\hat{\mb{G}}_{k}(f)$ of entropy-like functionals $G(f) = \int g(f(x),x) f(x) dx$. 


\section{Application to estimation of functionals of a density}
\label{sec:entropyest}
Our focus is the estimation of general non-linear functionals $G(f)$ of $d$-dimensional multivariate densities $f$ with known finite support ${\cal S} = [a,b]^d$, where $G(f)$ has the form
\begin{equation}
\label{eq:oracle}
G(f) = \int g(f(x),x) f(x) dx, 
\end{equation}
for some smooth function $g(f,x)$. Let ${\cal B}$ denote the boundary of ${\cal S}$. Assume that $T=N+M$ i.i.d realizations  $\{\mb{X}_1, \ldots, \mb{X}_N, \mb{X}_{N+1}, \ldots, \mb{X}_{N+M}\}$ are available from the density $f$. 

\subsection{Plug-in estimators of entropy}
\label{sec:weightedpluginest}
The truncated \emph{uniform kernel} density estimator is defined below. For any positive real number $k \leq M$, define the distance $d_k$ to be: $d_k = (k/M)^{1/d}$. Define the truncated kernel region for each $X \in {\cal S}$ to be $S_k(X) = \{Y \in {\cal S} : ||X-Y||_\infty \leq d_k/2\}$, and the volume of the truncated uniform kernel to be $V_k(X) = \int_{S_k(X)} dz$. Note that when the smallest distance from $X$ to ${\cal B}$ is greater than $d_k/2$, $V_k(X) = d_k^d = k/M$. Let $\mb{l}_k(X)$ denote the number of samples falling in $S_k(X)$: $\mb{l}_k(X) = \sum_{i=1}^{M} 1_{\{\mb{X}_i \in S_k(X)\}}$. The truncated {uniform kernel} density estimator is defined as
\begin{equation}
  \hat{\mb{f}}_{k}(X) = \frac{\mb{l}_k(X)}{MV_k(X)}.
\end{equation}

The plug-in estimator of the density functional is constructed using a data splitting approach as follows. The data is randomly subdivided into two parts $\{\mb{X}_1, \ldots, \mb{X}_N\}$ and $\{\mb{X}_{N+1}, \ldots, \mb{X}_{N+M}\}$ of $N$ and $M$ points respectively. In the first stage, we form the kernel density estimate ${\hat{\mb{f}}_k}$ at the $N$ points $\{\mb{X}_1, \ldots, \mb{X}_N\}$ using the $M$ realizations $\{\mb{X}_{N+1}, \ldots, \mb{X}_{N+M}\}$. Subsequently, we use the $N$ samples $\{\mb{X}_1, \ldots, \mb{X}_N\}$ to approximate the functional $G(f)$ and obtain the plug-in estimator:
\begin{eqnarray}
\label{eq:plugin}
  \hat{\mb{G}}_k &=& \frac{1}{N}\sum_{i=1}^N g({\hat{\mb{f}}{_k}(\mb{X}_i)},\mb{X}_i). 
\end{eqnarray}
Also define a standard kernel density estimator $\tilde{\mb{f}}_k$, which is identical to $\hat{\mb{f}}_k$ except that the volume $V_k(X)$ is always set to the untruncated value $V_k(X) = k/M$. Define
\begin{eqnarray}
\label{eq:plugin2}
  \tilde{\mb{G}}_k &=& \frac{1}{N}\sum_{i=1}^N g({\tilde{\mb{f}}{_k}(\mb{X}_i)},\mb{X}_i). 
\end{eqnarray}
The estimator $\tilde{\mb{G}}_k$ is identical to the estimator of Gy{\"{o}}rfi and van der Meulen~\cite{gyrofi}. Observe that the implementation of $\tilde{\mb{G}}_k$, unlike $\hat{\mb{G}}_k$, does not require knowledge about the support of the density.

\subsubsection{Assumptions}
\label{sec:assump}  
We make a number of technical assumptions that will allow us to obtain tight MSE convergence rates for the kernel density estimators defined above. $({\cal {A}}.0)$ : Assume that $k = k_0M^\beta$ for some rate constant $0<\beta<1$, and assume that $M$, $N$ and $T$ are linearly related through the proportionality constant $\alpha_{frac}$ with: $0 < \alpha_{frac} < 1$, $M = \alpha_{frac}T$ and $N = (1-\alpha_{frac})T$. $({\cal {A}}.1)$ : Let the density $f$ be uniformly bounded away from $0$ and upper bounded on the set ${\cal S}$, i.e., there exist constants $\epsilon_0$, $\epsilon_\infty$ such that $0 < \epsilon_0 \leq f(x) \leq \epsilon_\infty < \infty$ $\forall x \in {\cal S}$. $({\cal {A}}.2)$: Assume that the density $f$ has continuous partial derivatives of order $d$ in the interior of the set ${\cal S}$, and that these derivatives are upper bounded. $({\cal {A}}.3)$: Assume that the function $g(f,x)$ has $\max\{\lambda,d\}$ partial derivatives w.r.t. the argument $f$, where $\lambda$ satisfies the condition  $\lambda \beta>1$. Denote the $n$-th partial derivative of $g(f,x)$ wrt $x$ by $g^{(n)}(f,x)$.  $({\cal {A}}.4)$: Assume that the absolute value of the functional $g(f,x)$ and its partial derivatives are strictly upper bounded in the range $\epsilon_0 \leq f \leq \epsilon_\infty$ for all $x$. $({\cal {A}}.5)$: Let $\epsilon \in (0,1)$ and $\delta \in (2/3,1)$. Let ${\cal C}(M)$ be a positive function satisfying the condition $ {\cal C}(M) = \Theta(\exp(-M^{\beta(1-\delta)}))$. For some fixed $0 < \epsilon< 1$, define $p_l = (1-\epsilon)\epsilon_0$ and $p_u = (1+\epsilon)\epsilon_\infty $. Assume that the conditions
$$ (i) \sup_{x}|h(0,x)|  < G_1 < \infty, $$
$$ (ii) \sup_{f \in (p_l,p_u),x}|h(f,x)|  < G_2 < \infty, $$
$$(iii) \sup_{f \in (1/k,p_u),x }|h(f,x)|{\cal C}(M)  < G_3 < \infty \hspace{0.15in} \forall M,$$ 
$$(iv)\sup_{f \in (p_l,2^dM/k),x} |h(f,x)|{\cal C}(M)  < G_4 < \infty \hspace{0.15in} \forall M,$$ are satisfied by 
$h(f,x) = g(f,x), g^{(3)}(f,x)$ and $g^{(\lambda)}(f,x)$, for some constants $G_1$, $G_2$, $G_3$ and $G_4$.

These assumptions are comparable to other rigorous treatments of entropy estimation. The assumption $({\cal {A}}.0)$ is equivalent to choosing the bandwidth of the kernel to be a fractional power of the sample size~\cite{raykar}. The rest of the above assumptions can be divided into two categories: (i) assumptions on the density $f$, and (ii) assumptions on the functional $g$. The assumptions on the smoothness, boundedness away from $0$ and $\infty$ of the density $f$ are similar to the assumptions made by other estimators of entropy as listed in Section II,~\cite{beir}. The assumptions on the functional $g$ ensure that $g$ is sufficiently smooth and that the estimator is bounded. These assumptions on the functional are readily satisfied by the common functionals that are of interest in literature: Shannon $g(f,x) = - \log(f)I(f>0) + I(f=0)$ and R\'enyi $g(f,x) = f^{\alpha-1}I(f>0) + I(f=0)$ entropy, where $I(.)$ is the indicator function, and the quadratic functional $g(f,x) = f^{2}$.


\subsubsection{Analysis of MSE}
\label{sec:mseanal}
Under the assumptions stated above, we have shown the following in the Appendix:
\begin{theorem}
\label{knnbiasH}
The biases of the plug-in estimators $\hat{\mb{G}}_{k}, \tilde{\mb{G}}_{k}$ are given by
\begin{eqnarray}
\label{Biasu}
\mathbb{B}(\hat{\mb{G}}_{k}) &=& \sum_{i=1}^{d} c_{1,i}\left({\frac{k}{M}}\right)^{i/d} + \frac{c_{2}}{k} \nonumber + o\left(\frac{1}{k} + \frac{k}{M}\right) \nonumber \\
\mathbb{B}(\tilde{\mb{G}}_{k}) &=& c_{1}\left({\frac{k}{M}}\right)^{1/d} + \frac{c_{2}}{k} \nonumber + o\left(\frac{1}{k} + \frac{k}{M}\right), \nonumber
\end{eqnarray}
where $c_{1,i}$, $c_1$ and $c_2$ are constants that depend on $g$ and $f$.
\end{theorem}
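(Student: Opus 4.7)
My plan is to reduce the bias to two cascaded Taylor expansions: first of $g(\cdot,X)$ around the conditional mean of $\hat{\mb{f}}_k(X)$, to handle the stochastic fluctuation of $\hat{\mb{f}}_k$, and then of that conditional mean around $f(X)$, to harvest powers of $k/M$. Sample splitting makes this tractable: since $\hat{\mb{f}}_k$ is built from the independent block $\{\mb{X}_{N+1},\ldots,\mb{X}_{N+M}\}$, the bias reduces to $\mathbb{B}(\hat{\mb{G}}_k)=\expect[g(\hat{\mb{f}}_k(\mb{X}_1),\mb{X}_1)]-G(f)$. Conditioning on $\mb{X}_1=X$, the count $\mb{l}_k(X)=MV_k(X)\hat{\mb{f}}_k(X)$ is exactly $\mathrm{Bin}(M,P_k(X))$ with $P_k(X):=\int_{S_k(X)}f(y)\,dy$, so the conditional mean is $\bar f_k(X):=P_k(X)/V_k(X)$ and the conditional $j$-th central moment of $\hat{\mb{f}}_k(X)$ is $O(k^{-\lceil j/2\rceil})$ by standard binomial bounds.

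Next I would Taylor expand $g(\hat{\mb{f}}_k(X),X)$ in its first argument around $\bar f_k(X)$ to order $\lambda-1$. Using ${\cal A}.3$ (with $\lambda\beta>1$) together with ${\cal A}.5$, a standard concentration split that separates the high-probability event $\{\hat{\mb{f}}_k(X)\in(p_l,p_u)\}$ from its complement (whose probability is bounded by the exponentially small ${\cal C}(M)$) shows that the Lagrange remainder contributes $o(1/k)$ after integration. Taking conditional expectation kills the first-order fluctuation term; the second-order term yields $\tfrac{1}{2}\partial_f^2 g(\bar f_k(X),X)\var(\hat{\mb{f}}_k(X)\mid X)=\tfrac{1}{2}\partial_f^2 g(f(X),X)f(X)/k+o(1/k)$ by ${\cal A}.1$; and higher-order central moments are strictly smaller in $k$. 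Integrating the second-order piece against $f$ produces the $c_2/k$ contribution.

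The deterministic residue $g(\bar f_k(X),X)-g(f(X),X)$ is then expanded in powers of $d_k=(k/M)^{1/d}$ via $\bar f_k(X)-f(X)=V_k(X)^{-1}\int_{S_k(X)}[f(y)-f(X)]\,dy$. Taylor-developing $f$ inside $S_k(X)$ using ${\cal A}.2$ gives $\sum_{1\le|\alpha|\le d}\frac{\partial^\alpha f(X)}{\alpha!}m_\alpha(X)+o(d_k^{d})$, where $m_\alpha(X):=V_k(X)^{-1}\int_{S_k(X)}(y-X)^\alpha\,dy=O(d_k^{|\alpha|})$. For $X$ at distance at least $d_k/2$ from ${\cal B}$, the region $S_k(X)$ is a symmetric cube and $m_\alpha$ vanishes whenever any component of $\alpha$ is odd, so only even powers of $d_k$ survive; for $X$ inside the boundary strip of thickness $d_k/2$ the truncation is asymmetric and every $|\alpha|$ contributes. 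Substituting into the Taylor expansion of $g$ in $f$, multiplying by $f(X)$, and integrating over ${\cal S}$, the interior region produces the even powers $(k/M)^{2j/d}$, while parameterizing the boundary strip transversally to ${\cal B}$ (so $dx=d\xi\,d\sigma$) and performing the $\xi$ integration supplies one extra factor of $d_k$, yielding the remaining powers $(k/M)^{i/d}$ for $i=1,\dots,d$. Collecting gives the claimed expansion for $\hat{\mb{G}}_k$. The analogous argument for $\tilde{\mb{G}}_k$ differs only in that the conditional mean is $P_k(X)\cdot M/k$ rather than $\bar f_k(X)$; at boundary points this underestimates $f(X)$ by an $O(1)$ amount (since $V_k(X)<k/M$), and integrating this $O(1)$ discrepancy over the $O(d_k)$-thick boundary strip generates the leading $c_1(k/M)^{1/d}$ term, while the variance-induced $c_2/k$ term is identical to before.

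The main obstacle will be the boundary calculation: producing a legitimate power-series expansion on the thin strip with coefficients that are genuine surface integrals over ${\cal B}$, parameterizing transversally, and verifying after integration that every term not absorbed into the $o(1/k+k/M)$ remainder appears explicitly in the statement. The bookkeeping is delicate because the strip width and the per-point bias both scale with $d_k$, so contributions of different intrinsic orders combine in the integrated bias. In parallel, ${\cal A}.5$ must be used carefully to absorb the rare event $\hat{\mb{f}}_k(X)\notin(p_l,p_u)$ via the exponential bound ${\cal C}(M)$, since $g$ need not be Taylor-expandable there; once this event is dispensed with and the boundary expansion is collected, matching powers yields the stated bias formulas.
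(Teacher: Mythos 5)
Your proposal follows essentially the same route as the paper's proof: a Taylor expansion of $g$ about the conditional mean $\check{f}_k(X)=U_k(X)/V_k(X)$ with binomial central-moment bounds supplying the $c_2/k$ term and the rare-event split via ${\cal C}(M)$ controlling the remainder, followed by a deterministic expansion of $\check{f}_k-f$ split into an interior part and a transversally parameterized boundary strip to produce the $(k/M)^{i/d}$ terms, and the same $O(1)$-discrepancy-on-an-$O(d_k)$-strip argument for the leading term of $\tilde{\mb{G}}_k$. The only differences are cosmetic (expansion to order $\lambda-1$ rather than order three, and your sharper observation that the interior contributes only even powers), so the plan is sound and matches the paper.
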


\begin{theorem}
\label{knnvarH}
The variances of the plug-in estimators $\hat{\mb{G}}_{k}, \tilde{\mb{G}}_{k}$ are identical up to leading terms, and are given by
\begin{eqnarray}
\label{Variance}
\var(\hat{\mb{G}}_{k}) &=& c_4\left(\frac{1}{N}\right)+ c_5\left(\frac{1}{M}\right) + o\left(\frac{1}{M} + \frac{1}{N}\right) \nonumber \\
\var(\tilde{\mb{G}}_{k}) &=& c_4\left(\frac{1}{N}\right)+ c_5\left(\frac{1}{M}\right) + o\left(\frac{1}{M} + \frac{1}{N}\right), \nonumber
\end{eqnarray}
where $c_4$ and $c_5$ are constants that depend on $g$ and $f$.
\end{theorem}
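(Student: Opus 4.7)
The plan is to apply the law of total variance by conditioning on the density-estimation half-sample $\mb{Z}_M := \{\mb{X}_{N+1},\ldots,\mb{X}_{N+M}\}$, which fully determines $\hat{\mb{f}}_k$:
\begin{equation*}
\var(\hat{\mb{G}}_k) = \expect[\var(\hat{\mb{G}}_k \mid \mb{Z}_M)] + \var(\expect[\hat{\mb{G}}_k \mid \mb{Z}_M]).
\end{equation*}
This cleanly separates the two noise sources. I expect the first term to yield the $c_4/N$ piece (averaging error of the plug-in over the evaluation half-sample) and the second to yield the $c_5/M$ piece (fluctuation of the density estimate itself), with all higher-order contributions absorbed into $o(1/M+1/N)$.

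For the conditional-variance term, given $\mb{Z}_M$ the summands $g(\hat{\mb{f}}_k(\mb{X}_i),\mb{X}_i)$ are i.i.d.\ functions of the independent sample $\mb{X}_1,\ldots,\mb{X}_N$, so
\begin{equation*}
\var(\hat{\mb{G}}_k \mid \mb{Z}_M) = \frac{1}{N}\,\var_{\mb{X}}\bigl(g(\hat{\mb{f}}_k(\mb{X}),\mb{X}) \bigm| \mb{Z}_M\bigr).
\end{equation*}
I would then expand $g(\hat{\mb{f}}_k(\mb{X}),\mb{X}) = g(f(\mb{X}),\mb{X}) + \partial_f g(f(\mb{X}),\mb{X})(\hat{\mb{f}}_k(\mb{X})-f(\mb{X})) + O\bigl((\hat{\mb{f}}_k(\mb{X})-f(\mb{X}))^2\bigr)$, using $({\cal A}.3)$--$({\cal A}.4)$ to bound derivatives of $g$ and the pointwise mean-square consistency of $\hat{\mb{f}}_k$ (bias $O((k/M)^{1/d})$, variance $O(1/k)$, both $o(1)$ under $({\cal A}.0)$) to reduce $\expect[\var_\mb{X}(g(\hat{\mb{f}}_k(\mb{X}),\mb{X}) \mid \mb{Z}_M)]$ to $\var_\mb{X}(g(f(\mb{X}),\mb{X})) + o(1)$. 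This identifies $c_4 = \var_\mb{X}(g(f(\mb{X}),\mb{X}))$.

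For the variance of the conditional mean $h(\mb{Z}_M) := \int g(\hat{\mb{f}}_k(x),x) f(x)\,dx$, Taylor-expand $g(\cdot,x)$ in its first argument about $f(x)$:
\begin{equation*}
h(\mb{Z}_M) = G(f) + \int \partial_f g(f(x),x)\bigl(\hat{\mb{f}}_k(x)-f(x)\bigr) f(x)\,dx + R_M,
\end{equation*}
and substitute $\hat{\mb{f}}_k(x) = (MV_k(x))^{-1}\sum_{j=N+1}^{N+M} 1\{\mb{X}_j \in S_k(x)\}$ to express the linear term as $M^{-1}\sum_j \phi_k(\mb{X}_j)$, where
\begin{equation*}
\phi_k(y) := \int \partial_f g(f(x),x)\,\frac{1\{y \in S_k(x)\}}{V_k(x)}\,f(x)\,dx.
\end{equation*}
Independence of the $\mb{X}_j$ yields $\var(M^{-1}\sum_j \phi_k(\mb{X}_j)) = M^{-1}\var(\phi_k(\mb{X}_1))$; under $({\cal A}.0)$ the Lebesgue differentiation theorem gives $\phi_k(y) \to \partial_f g(f(y),y) f(y)$ almost everywhere, and bounded convergence identifies $c_5 = \var\bigl(\partial_f g(f(\mb{X}),\mb{X})\, f(\mb{X})\bigr)$.

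The main obstacle is controlling the quadratic remainder $R_M$, as well as its cross-covariance with the linear term, at the $o(1/M)$ level, and justifying the analogous remainder in the conditional-variance expansion at $o(1/N)$. Both require uniform bounds over rare events in which $\hat{\mb{f}}_k$ strays far from $f$, which is precisely what assumption $({\cal A}.5)$ is designed for: the exponentially decaying tail ${\cal C}(M) = \Theta(\exp(-M^{\beta(1-\delta)}))$ combined with conditions (iii)--(iv) lets one split each integral into a high-probability event on which $\hat{\mb{f}}_k \in (p_l,p_u)$ (handled by the Taylor bound via $({\cal A}.4)$) and a low-probability event whose $g$-contribution is killed by ${\cal C}(M)$. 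The untruncated estimator $\tilde{\mb{G}}_k$ admits the same analysis once one observes that the boundary region where $V_k(x) \neq k/M$ has Lebesgue measure $O(d_k) = o(1)$, so it affects only the residual $o(1/M)+o(1/N)$ and leaves $c_4,c_5$ unchanged.
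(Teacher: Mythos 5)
Your decomposition is sound and genuinely different in organization from the paper's. The paper does not condition on the density-estimation half-sample; instead it Taylor-expands $g(\hat{\mb{f}}_k(\mb{X}_i),\mb{X}_i)$ about the conditional mean $\check{f}_k(\mb{X}_i)$, centers each term ($p_i,q_i,r_i,s_i$), and writes $\var(\hat{\mb{G}}_k)$ as $\frac{1}{N}\expect[(p_1+q_1+r_1+s_1)^2]+\frac{N-1}{N}\expect[(p_1+\cdots)(p_2+\cdots)]$. The diagonal term yields $c_4=\var[g(f(\mb{Z}),\mb{Z})]$ (matching your conditional-variance piece), and the off-diagonal term is evaluated via dedicated cross-moment lemmas for $\mathrm{Cov}[\hat{\mb{e}}_k^q(X),\hat{\mb{e}}_k^r(Y)]$, split according to whether the kernel regions at $X$ and $Y$ overlap; the surviving contribution $\expect[q_1q_2]=c_5/M$ plays the role of your $\var(h(\mb{Z}_M))$. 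The two routes are in fact algebraically the same object viewed differently (conditional independence of the $N$ evaluation points given $\mb{Z}_M$ makes the off-diagonal covariance equal $\var(h(\mb{Z}_M))$), but your version buys a cleaner, influence-function-style identification of $c_5=\var(\partial_f g(f(\mb{X}),\mb{X})f(\mb{X}))$ as the variance of a single i.i.d.\ average over the $M$ construction samples, whereas the paper's version localizes all the analytic work into reusable pairwise moment lemmas that it also needs elsewhere (e.g.\ for the $r_i,s_i$ terms).

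One caution: you slightly undersell the remainder control. Assumption $({\cal A}.5)$ only gives boundedness of $g$ and its derivatives on the rare events where $\hat{\mb{f}}_k$ escapes $(p_l,p_u)$; it does not by itself show $\var(R_M)=o(1/M)$ or $\mathrm{Cov}(\text{linear},R_M)=o(1/M)$. For that you still need second-order covariance estimates of the form $\mathrm{Cov}[\hat{\mb{e}}_k(X),\hat{\mb{e}}_k^2(Y)]$ and $\mathrm{Cov}[\hat{\mb{e}}_k^2(X),\hat{\mb{e}}_k^2(Y)]$ over pairs $(X,Y)$, handling the overlapping-kernel region of measure $O(k/M)$ separately from the disjoint region --- which is exactly the content of the paper's Lemmas~\ref{intersectu}--\ref{covariancelemma}. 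So your route does not actually avoid that machinery; it only repackages where it is invoked. With that ingredient supplied, your argument goes through and yields the same leading constants.
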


\subsubsection{Optimal MSE rate}

From Theorem \ref{knnbiasH}, observe that the conditions $k \to \infty$ and $k/M \to 0$ are necessary for the estimators $\hat{\mb{G}}_{k}$ and $\tilde{\mb{G}}_{k}$ to be unbiased. Likewise from Theorem \ref{knnvarH}, the conditions $N \to \infty$ and $M \to \infty$ are necessary for the variance of the estimator to converge to $0$. Below, we optimize the choice of bandwidth $k$ for minimum MSE, and also show that the optimal MSE rate is invariant to the choice of $\alpha_{frac}$.

\paragraph{Optimal choice of $k$}
Minimizing the MSE over $k$ is equivalent to minimizing the square of the bias over $k$. The optimal choice of $k$ is given by
\begin{eqnarray}
\label{kopt}
k_{opt} &=& \Theta({M^{{1}/{1+d}}}),
\end{eqnarray}
and the bias evaluated at $k_{opt}$ is  $\Theta({M^{{-1}/{1+d}}})$.

\paragraph{Choice of $\alpha_{frac}$}
Observe that the MSE of $\hat{\mb{G}}_{k}$ and $\tilde{\mb{G}}_{k}$ are dominated by the squared bias $(\Theta(M^{-2/(1+d)}))$ as contrasted to the variance $(\Theta(1/N+1/M))$. This implies that the asymptotic MSE rate of convergence is invariant to the selected proportionality constant  $\alpha_{frac}$.

In view of (a) and (b) above, the optimal MSE for the estimators $\hat{\mb{G}}{_k}$ and $\tilde{\mb{G}}{_k}$ is therefore achieved for the choice of $k=\Theta(M^{1/(1+d)})$, and is given by $\Theta(T^{-2/(1+d)})$. Our goal is to reduce the estimator MSE to $O(T^{-1})$. We do so by applying the method of weighted ensembles described in Section~\ref{sec:genmeth}.

\subsection{Weighted ensemble entropy estimator}
For a positive integer $L > I = d-1$, choose $\bar{l} = \{l_1, \ldots, l_L\}$ to be positive real numbers. Define the mapping $k(l) = l\sqrt{M}$ and let $\bar{k} =  \{k(l); l \in \bar{l}\}$. Define the weighted ensemble estimator 
\begin{equation}
\hat{\mb{G}}_w  = \sum_{l \in \bar{l}} w(l)\hat{\mb{G}}_{k(l)}.
\label{eq:weightedensemble}
\end{equation}
From Theorems \ref{knnbiasH} and \ref{knnvarH}, we see that the biases of the ensemble of estimators $\{\hat{\mb{G}}_{k(l)}; l \in \bar{l}\}$ satisfy ${\mathscr C}.1$(\ref{BE}) when we set  $\psi_i(l) = l^{i/d}$ and ${\cal I} = \{1,..,d-1\}$. Furthermore, the general form of the variance of $\hat{\mb{G}}_{k(l)}$ follows ${\mathscr C}.2$(\ref{VE}) because $N, M = \Theta(T)$. This implies that we can use the weighted ensemble estimator $\hat{\mb{G}}_w$ to estimate entropy at $O(L\eta_L(d)/T)$ convergence rate by setting $w$ equal to the optimal weight $w_o$ given by (\ref{convexsoltheory}).

\section{Experiments}
\label{sec:exp}

We illustrate the superior performance of the proposed weighted ensemble estimator for two applications: (i) estimation of the Panter-Dite rate distortion factor, and (ii) estimation of entropy to test for randomness of a random sample. 

For finite $T$ direct use of Theorem 1 can lead to excessively high variance. This is because forcing the condition (2.3) that $\gamma_w(i)=0$ is too strong and, in fact, not necessary. The careful reader may notice that to obtain $O(T^{-1}) $ MSE convergence  rate in Theorem 1 it is sufficient that $\gamma_w(i)$ be of order $O(T^{-1/2+i/2d})$. Therefore, in practice we determine the optimal weights according to the optimization:

\begin{equation}
\begin{split}
\min_w \quad &\epsilon\\
\text{subject to} \quad &\gamma_w(0) = 1,\\
&\lvert \gamma_w(i) T^{1/2 - i/2d} \rvert \leq \epsilon, \quad i \in \mathcal{I},\\
&\lVert w \rVert_2^2 \leq \eta.
\end{split}
\label{convexsol}
\end{equation}

The optimization \eqref{convexsol} is also  convex. Note that, as contrasted to \eqref{convexsoltheory}, the norm of the weight vector $w$ is bounded instead of being minimized.  By relaxing the constraints $\gamma_w(i)=0$ in \eqref{convexsoltheory} to the softer constraints in \eqref{convexsol}, the upper bound $\eta$ on $\lVert w \rVert_2^2$ can be reduced from the value $\eta_L(d)$ obtained by solving \eqref{convexsoltheory}.  This results in a more favorable trade-off between bias and variance for moderate sample sizes.  In our experiments, we find that setting $\eta = 3d$ yields good MSE performance.  Note that as $T \to \infty$, we must have $\gamma_w(i) \to 0$ for $i \in \mathcal{I}$  in order to keep $\epsilon$ finite, thus recovering the strict constraints in \eqref{convexsoltheory}.
%



For fixed sample size $T$ and dimension $d$, observe that increasing $L$ increases the number of degrees of freedom in the convex problem \eqref{convexsol}, and therefore will result in a smaller value of $\epsilon$ and in turn improved estimator performance.   In our simulations, we choose $\bar{l}$ to be $L=50$ equally spaced values between $0.3$ and $3$, ie the $l_i$ are uniformly spaced as $$l_i = \frac{x}{a} + \frac{(a-1)ix}{aL}; i=1,..,L,$$ with scale and range parameters $a=10$ and $x=3$ respectively. We limit $L$ to 50 because we find that the gains beyond $L=50$ are negligible. The reason for this diminishing return is a direct result of the increasing similarity among the entries in $\bar{l}$, which translates to increasingly similar basis functions $\psi_i(l) = l^{i/d}$. 



\subsection{Panter-Dite factor estimation}

\begin{figure}[!t]
\centering
\subfigure[\small{Variation of MSE of Panter-Dite factor estimates as a function of sample size $T$. From the figure, we see that the proposed weighted estimator has the fastest MSE rate of convergence wrt sample size $T$ ($d=6$).}]{
\includegraphics[scale=.25]{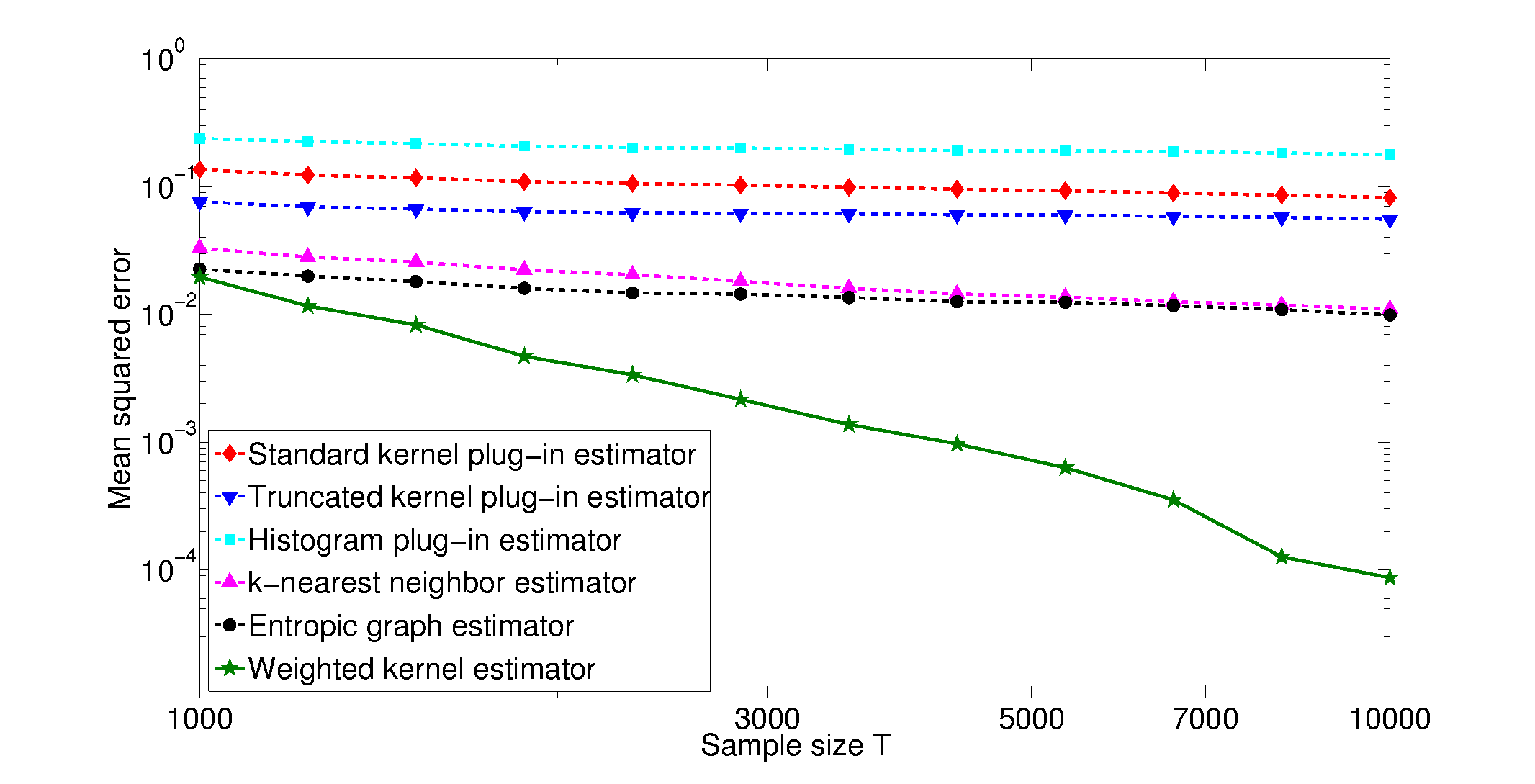}
\label{a-compare}
}
\subfigure[\small{Variation of MSE of Panter-Dite factor estimates as a function of dimension $d$. From the figure, we see that the MSE of the proposed weighted estimator has the slowest rate of growth with increasing dimension $d$ ($T=3000$).}]{
\includegraphics[scale=.25]{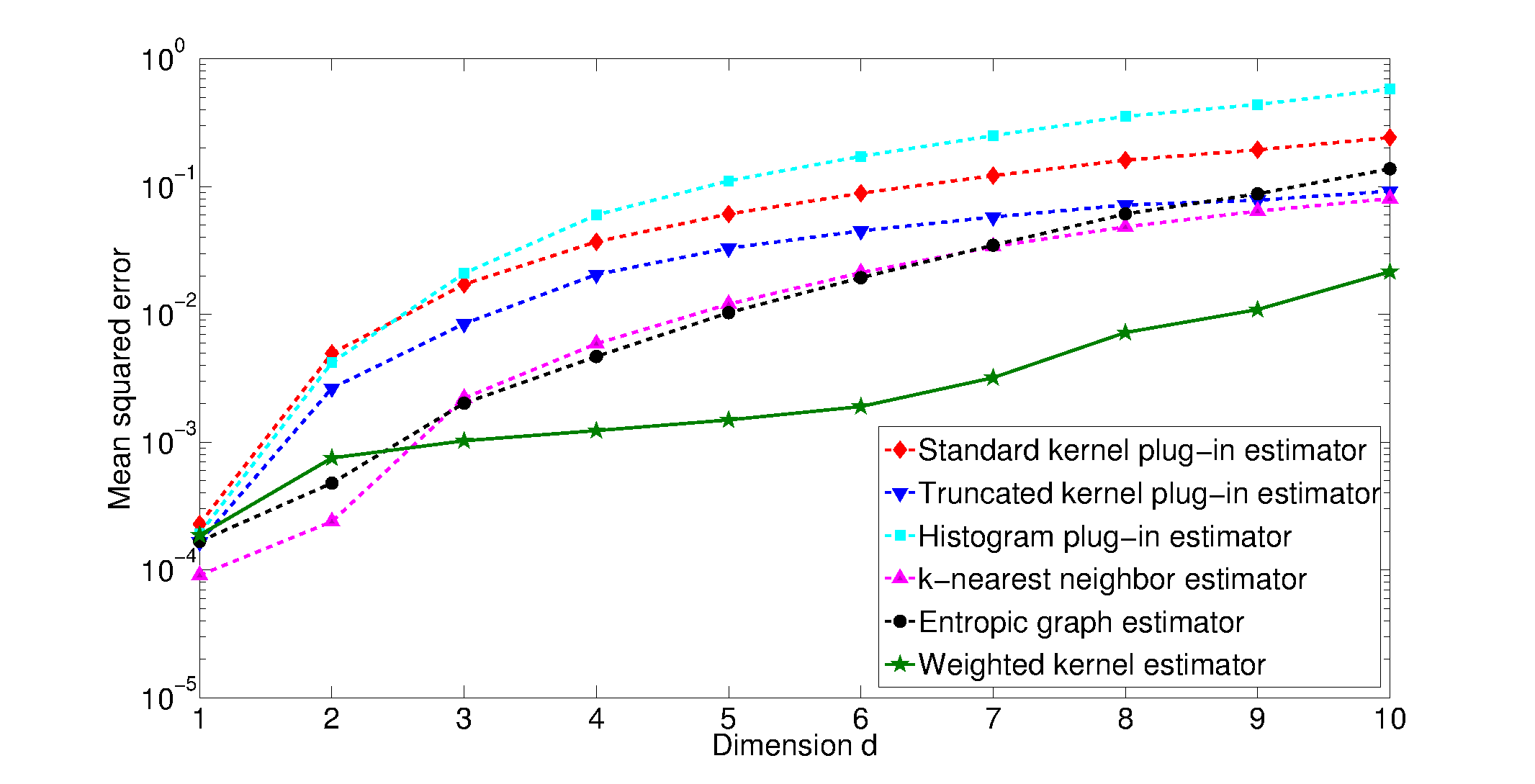}
\label{ad-compare}
}
\caption{Variation of MSE of Panter-Dite factor estimates using standard kernel plug-in estimator~[14], truncated kernel plug-in estimator~(3.3), histogram plug-in estimator[17], $k$-NN estimator~[20], entropic graph estimator~[18] and the weighted ensemble estimator~(3.6). }
\end{figure}

For a $d$-dimensional source with underlying density $f$, the Panter-Dite distortion-rate function~\cite{riten} for a $q$-dimensional vector quantizer with $n$ levels of quantization is given by $ \delta(n)= n^{-2/q} \int f^{q/(q+2)}(x) dx. $ The Panter-Dite factor corresponds to the functional $G(f)$ with $g(f,x) = n^{-2/q} f^{-2/(q+2)}I(f>0) + I(f=0)$. The Panter-Dite factor is directly related to the R\'{e}nyi $\alpha$-entropy, for which several other estimators have been proposed~\cite{gyrofi0,heroJ,pal,leo2}. 

In our simulations we compare six different choices of functional estimators - the three estimators previously introduced: (i) the standard kernel plug-in estimator $\tilde{\mb{G}}_k$, (ii) the boundary truncated plug-in estimator $\hat{\mb{G}}_k$ and (iii) the weighted estimator $\hat{\mb{G}}_w$ with optimal weight $w = w^*$ given by (\ref{convexsol}), and in addition the following popular entropy estimators: (iv) histogram plug-in estimator~\cite{gyrofi0}, (v) $k$-nearest neighbor ($k$-NN) entropy estimator~\cite{leo2} and (vi) entropic $k$-NN graph estimator~\cite{heroJ,pal}. For both $\tilde{\mb{G}}_k$ and $\hat{\mb{G}}_k$, we select the bandwidth parameter $k$ as a function of $M$ according to the optimal proportionality $k=M^{1/(1+d)}$ and $N=M=T/2$. 

We choose $f$ to be the $d$ dimensional mixture density $f(a,b,p,d) = pf_\beta(a,b,d) + (1-p)f_u(d)$; where $d=6$, $f_\beta(a,b,d)$ is a $d$-dimensional Beta density with parameters $a=6,b=6$, $f_u(d)$ is a $d$-dimensional uniform density and the mixing ratio $p$ is $0.8$. The reason we choose the beta-uniform mixture for our experiments is because it trivially satisfies all the assumptions on the density $f$ listed in Section 3.1, including the assumptions of finite support and strict boundedness away from 0 on the support. The true value of the Panter-Dite factor $\delta(n)$ for the beta-uniform mixture is calculated using numerical integration methods via the 'Mathematica' software (http://www.wolfram.com/mathematica/). Numerical integration is used because evaluating the entropy in closed form for the beta-uniform mixture is not tractable. 

The MSE values for each of the six estimators are calculated by averaging the squared error $[\hat{\delta}_i(n) - \delta(n)]^2$, $i=1,..,m$ over $m = 1000$ Monte-Carlo trials, where each $\hat{\delta}_i(n)$ corresponds to an independent instance of the estimator.


\subsubsection{Variation of MSE with sample size $T$}
The MSE results of the different estimators are shown in Fig.~\ref{a-compare} as a function of sample size $T$, for fixed dimension $d=6$. It is clear from the figure that the proposed ensemble estimator $\hat{\mb{G}}_w$ has significantly faster rate of convergence while the MSE of the rest of the estimators, including the truncated kernel plug-in estimator, have similar, slow rates of convergence. It is therefore clear that the proposed optimal ensemble averaging significantly accelerates the MSE convergence rate.


\subsubsection{Variation of MSE with dimension $d$}

For fixed sample size $T$ and fixed number of estimators $L$, it can be seen that $\epsilon$ increases monotonically with $d$. This follows from the fact that the number of constraints in the convex problem \ref{convexsol} is equal to $d+1$ and each of the basis functions $\psi_i(l) = l^{i/d}$ monotonically approaches $1$ as $d$ grows, . This in turn implies that for a fixed sample size $T$ and number of estimators $L$, the overall MSE of the ensemble estimator should increase monotonically with the dimension $d$.

The MSE results of the different estimators are shown in Fig.~\ref{ad-compare} as a function of dimension $d$, for fixed sample size $T=3000$. For the standard kernel plug-in estimator and truncated kernel plug-in estimator, the MSE increases  rapidly with $d$ as expected. The MSE of the histogram and $k$-NN estimators increase at a similar rate, indicating that these estimators suffer from the curse of dimensionality as well. On the other hand, the MSE of the weighted estimator also increases with the dimension as predicted, but at a slower rate. Also observe that the MSE of the weighted estimator is smaller than the MSE of the other estimators for all dimensions $d>3$.

\subsection{Distribution testing}

\begin{figure}[!t]
\centering
\subfigure[\small{Entropy estimates for random samples corresponding to hypothesis $H_0$ (experiments 1-500) and $H_1$ (experiments 501-1000).}]{
\includegraphics[scale=.30]{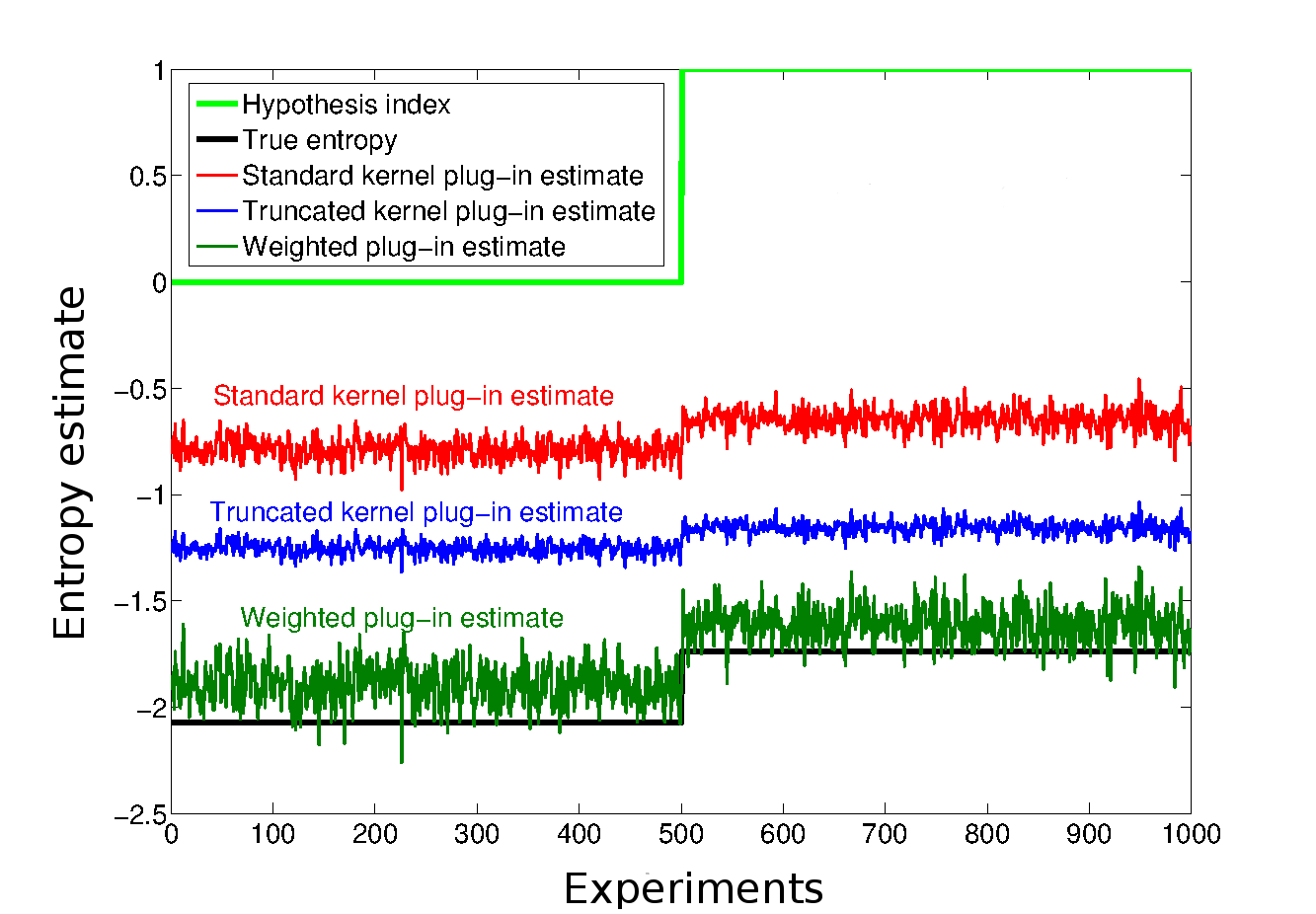}
\label{c-compare}
}
\subfigure[\small{Histogram envelopes of entropy estimates for random samples corresponding to hypothesis $H_0$ (blue) and $H_1$ (red).}]{
\includegraphics[scale=.30]{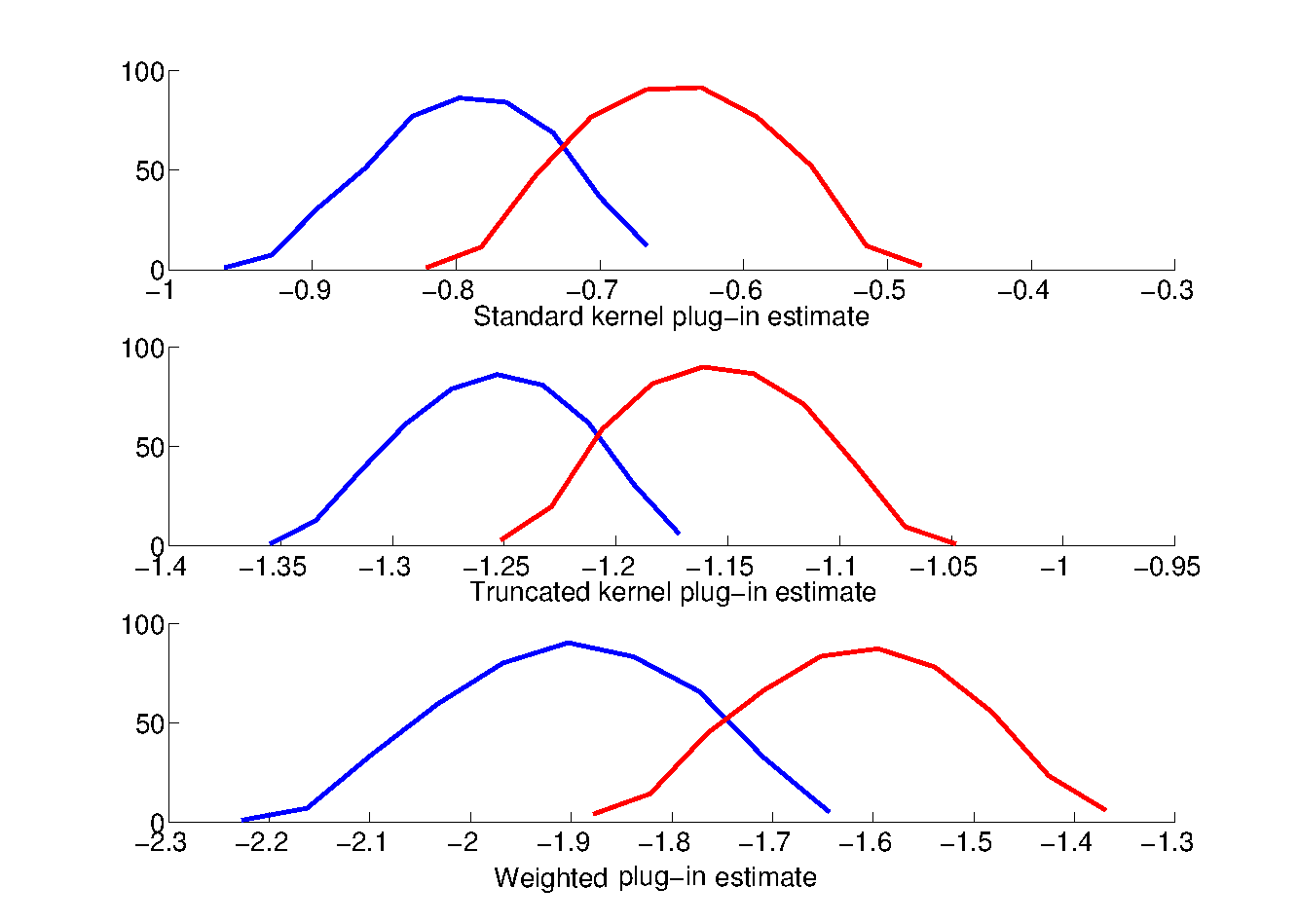}
\label{d-compare}
}
\caption{Entropy estimates using standard kernel plug-in estimator, truncated kernel plug-in estimator and the weighted estimator, for random samples corresponding to hypothesis $H_0$ and $H_1$. The weighted estimator provides better discrimination ability by suppressing the bias, at the cost of some additional variance.}
\end{figure}

In this section, we illustrate the weighted ensemble estimator for non-parametric estimation of Shannon differential entropy.  The Shannon differential entropy is given by $G(f)$ where $g(f,x) = - \log(f)I(f>0) + I(f=0)$. The improved accuracy of the weighted ensemble estimator is demonstrated in the context of hypothesis testing using estimated entropy as a statistic to test for the underlying probability distribution of a random sample. Specifically, the samples under the null and alternate hypotheses $H_0$ and $H_1$ are drawn from the probability distribution $f(a,b,p,d)$, described in Section IV.A, with fixed $d=6$, $p=0.75$ and two sets of values of $a,b$ under the null and alternate hypothesis, $H_0: a=a_0,b=b_0$ versus $H_1: a=a_1,b=b_1$.

First, we fix $a_0=b_0=6$ and $a_1=b_1=5$. The density under the null hypothesis $f(6,6,0.75,6)$ has greater curvature relative to $f(5,5,0.75,6)$ and therefore has smaller entropy.   Five hundred (500) experiments are performed under each hypothesis with each experiment consisting of 1000 samples drawn from the corresponding distribution. The true entropy and  estimates $\tilde{\mb{G}}_k$, $\hat{\mb{G}}_k$ and $\hat{\mb{G}}_w$ obtained from each instance of $10^3$ samples are shown in Fig.~\ref{c-compare} for the 1000 experiments. This figure suggests that the  ensemble weighted estimator provides better discrimination ability by suppressing the bias, at the cost of some additional variance.

To demonstrate that the weighted estimator provides better discrimination, we plot the histogram envelope of the entropy estimates using standard kernel plug-in estimator, truncated kernel plug-in estimator and the weighted estimator for the cases corresponding to the hypothesis $H_0$ (color coded {blue}) and $H_1$ (color coded {red}) in Fig.~\ref{d-compare}. Furthermore, we quantitatively measure the discriminative ability of the different estimators using the deflection statistic $ds = {|\mu_1-\mu_0|}/{\sqrt{\sigma_0^2+\sigma_1^2}},$ where $\mu_0$ and $\sigma_0$ (respectively  $\mu_1$ and $\sigma_1$) are the sample mean and standard deviation of the entropy estimates. The deflection statistic was found to be $1.49$, $1.60$ and $1.89$ for the standard kernel plug-in estimator, truncated kernel plug-in estimator and the weighted estimator respectively. The receiver operating curves (ROC) for this entropy-based test using the three different estimators are shown in Fig.~\ref{b-compare}. The corresponding areas under the ROC curves (AUC) are given by $0.9271$, $0.9459$ and $0.9619$. 

\begin{figure}[!t]
\centering
\subfigure[\small{ROC curves corresponding to entropy estimates obtained using standard and truncated kernel plug-in estimators and the weighted estimator. The corresponding AUC are given by $0.9271$, $0.9459$ and $0.9619$.}]{
\includegraphics[scale=.30]{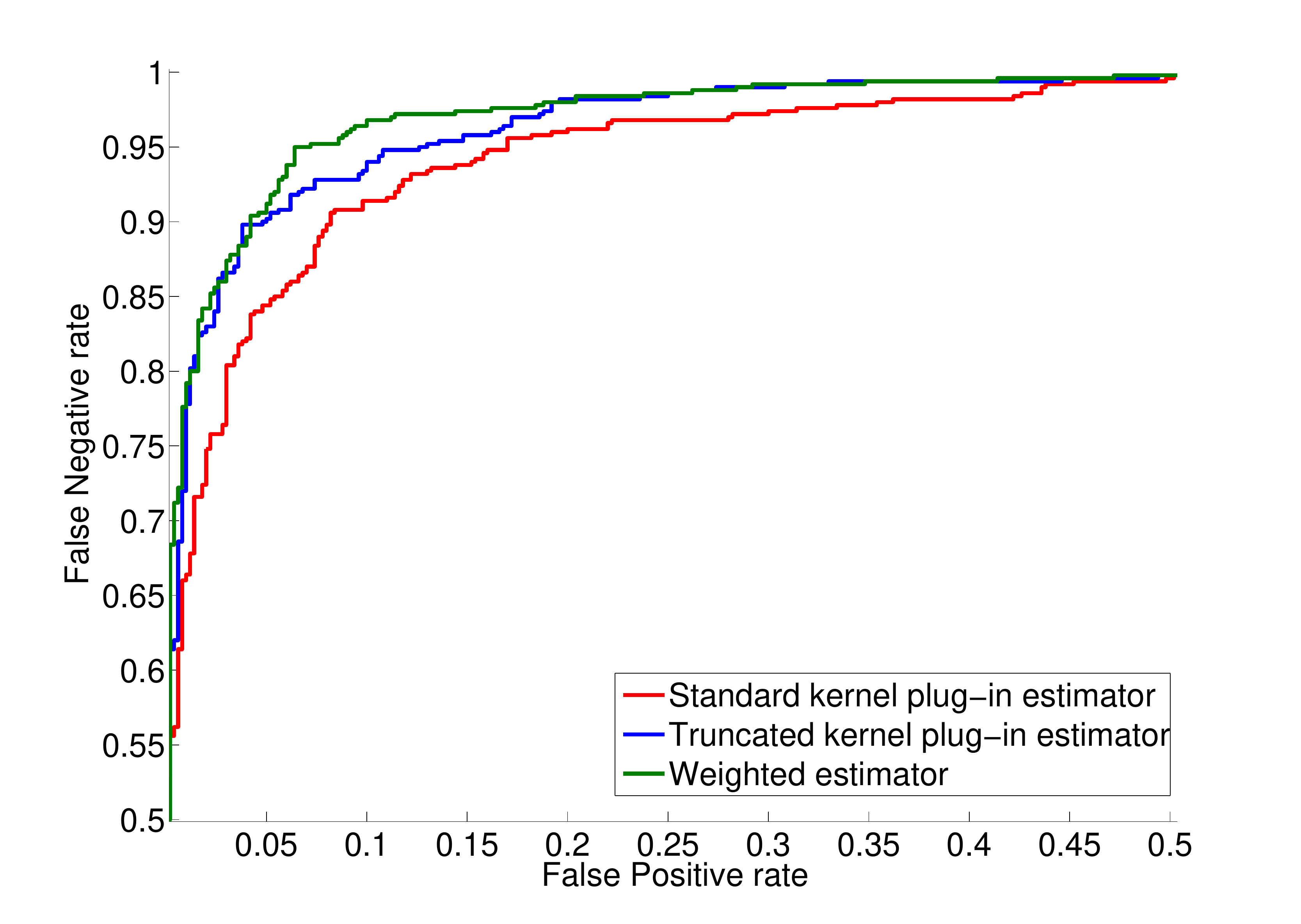}
\label{b-compare}
}
\subfigure[\small{Variation of AUC curves vs $\delta (= a_0-a_1, b_0-b_1)$ corresponding to Neyman-Pearson omniscient test, entropy estimates using the standard and truncated kernel plug-in estimators and the weighted estimator.}]{
\includegraphics[scale=.30]{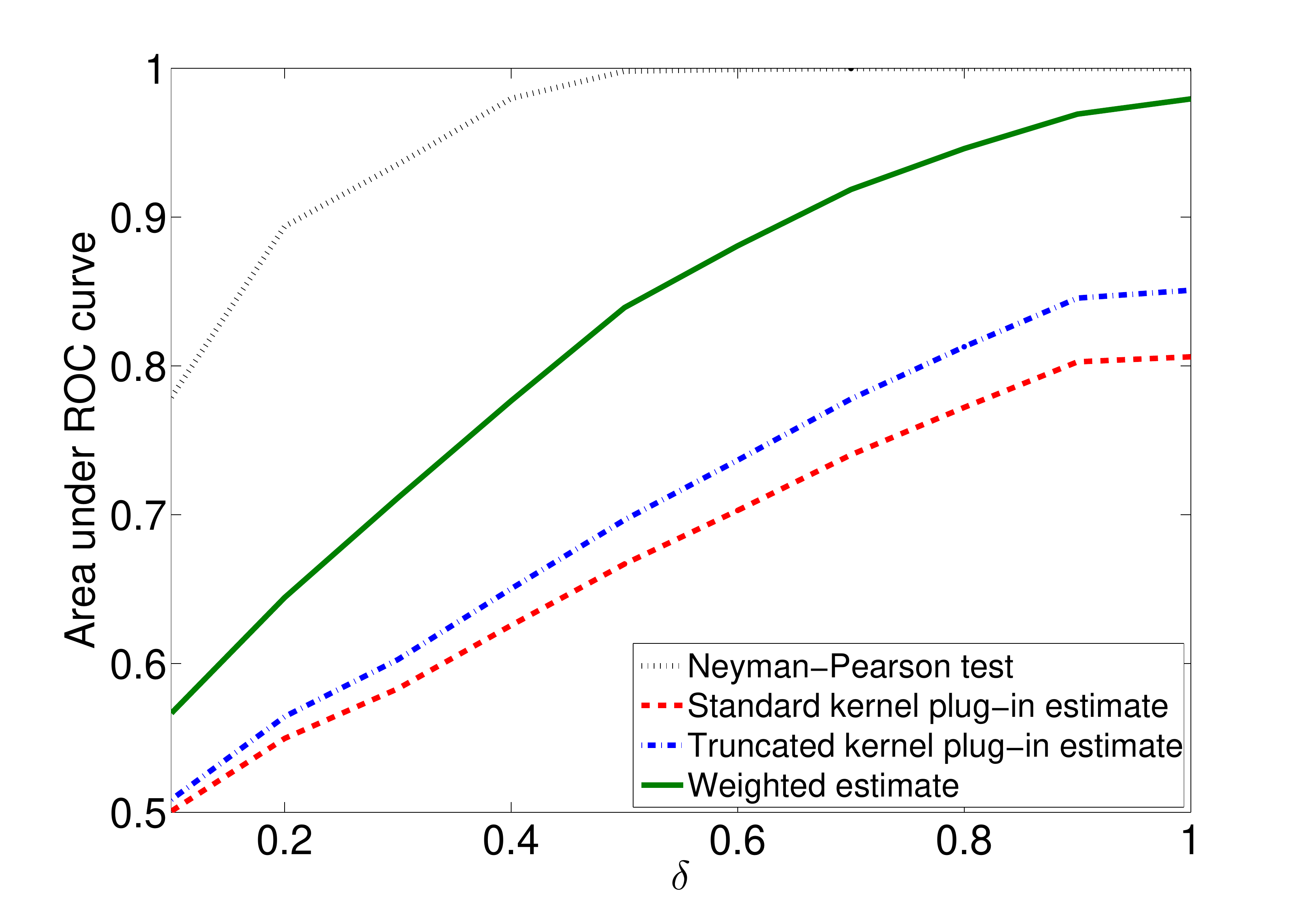}
\label{e-compare}
}
\caption{Comparison of performance in terms of ROC for the distribution testing problem. The weighted estimator uniformly outperforms the individual plug-in estimators.}
\end{figure}

In our final experiment, we fix $a_0=b_0=10$ and set $a_1=b_1=10-\delta$,   perform 500 experiments each under the null and alternate hypotheses with samples of size 5000, and plot the AUC as $\delta$ varies from $0$ to $1$ in Fig.~\ref{e-compare}. For comparison, we also plot the AUC for the Neyman-Pearson likelihood ratio test. The Neyman-Pearson likelihood ratio test, unlike the Shannon entropy based tests, is an omniscient test that assumes knowledge of both the underlying beta-uniform mixture parametric model of the density and the parameter values $a_0$, $b_0$ and $a_1$, $b_1$ under the null and alternate hypothesis respectively. Figure 4 shows that the weighted estimator {\emph {uniformly and significantly}} outperforms the individual plug-in estimators and comes closest to the performance of the omniscient Neyman-Pearson likelihood test. The relatively superior performance of the Neyman-Pearson likelihood test is due to the fact that the weighted estimator is a nonparametric estimator that has marginally higher variance (proportional to $||w^*||_2^2$) as compared to the underlying parametric model for which the Neyman-Pearson test statistic provides the most powerful test. 

\section{Conclusions}
\label{sec:con}
We have proposed a new estimator of functionals of a multivariate density based on weighted ensembles of kernel density estimators.  For ensembles of estimators that satisfy general conditions on bias and variance as specified by ${\mathscr C}.1$(\ref{BE}) and ${\mathscr C}.2$(\ref{VE}) respectively,  the weight optimized ensemble estimator has parametric $O(T^{-1})$ MSE convergence rate that can be much faster than the rate of convergence of any of the individual estimators in the ensemble. The optimal weights are determined as a solution to a convex optimization problem that can be performed offline and does not require training data. We illustrated this estimator for  uniform kernel plug-in estimators and demonstrated the superior performance of the weighted ensemble entropy estimator for (i)  estimation of the Panter-Dite factor and (ii) non-parametric hypothesis testing. 

Several extensions of the framework of this paper are being pursued: (i) using $k$-nearest neighbor ($k$-NN) estimators in place of kernel estimators; (ii) extending the framework to the case where support ${\cal S}$ {\emph {is not known}}, but for which conditions ${\mathscr C}.1$ and ${\mathscr C}.2$ hold; (iii) using ensemble estimators for estimation of other functionals of probability densities including divergence, mutual information and intrinsic dimension; and (iv) using an $l_1$ norm $\|w\|_1$in place of the $l_2$ norm $\|w\|_2$ in  the weight optimization algorithm (2.3) so as to introduce sparsity into the weighted ensemble. 

\section*{Acknowledgement}
This work was partially supported by (i) ARO grant W911NF-12-1-0443 and (ii) NIH grant 2P01CA087634-06A2.

\begin{figure}[h]
\centering
\includegraphics[width=6.2in]{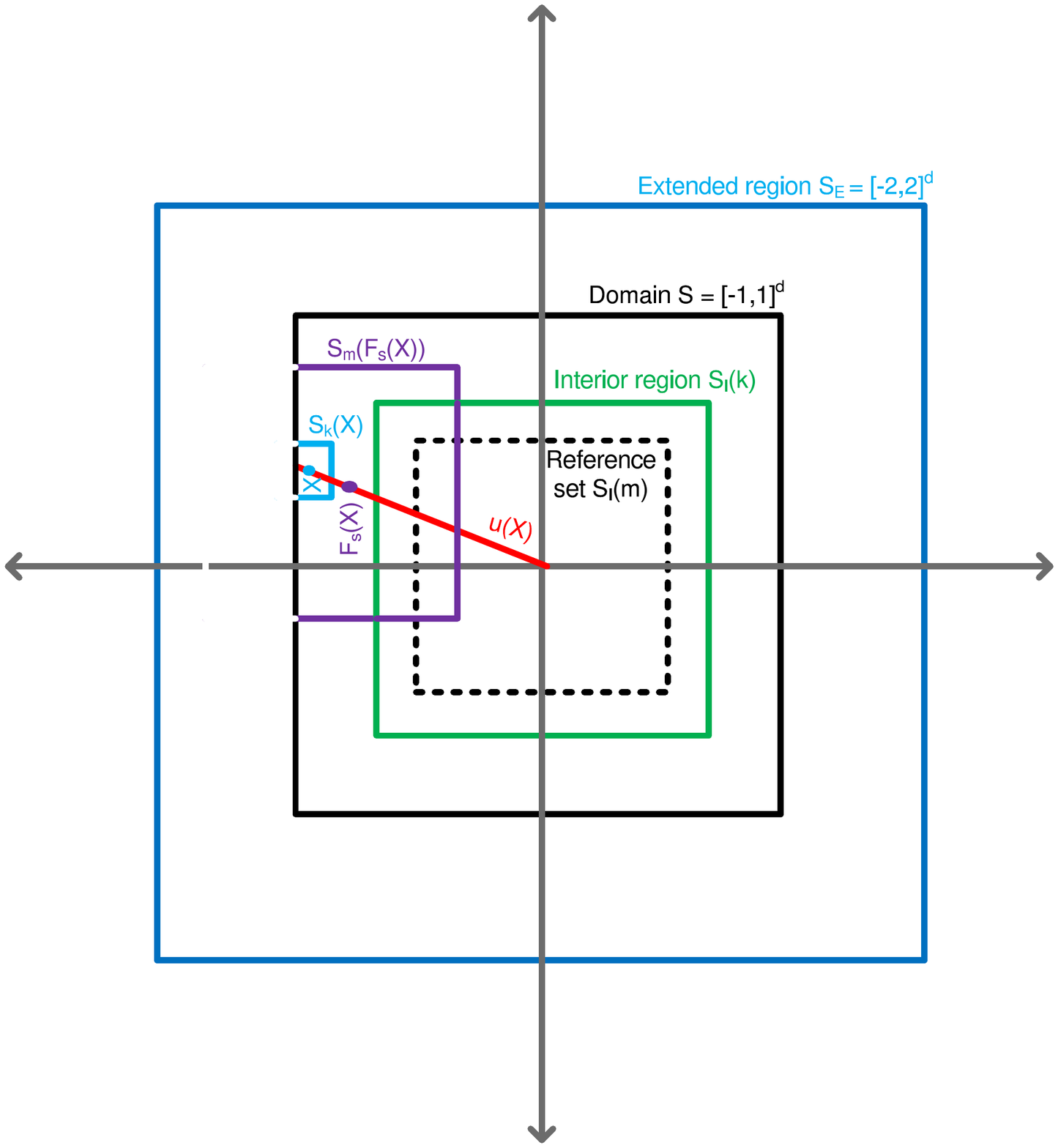}
\caption{Illustration for the proof of Lemma~\ref{biaslemma}.}
\label{i-compare}
\end{figure}

\appendix
\appendixpage

\section*{Outline of appendix}
We first establish moment properties for uniform kernel density estimates in Appendix~\ref{sec:kernmoments}. Subsequently, we prove theorems \ref{knnbiasH} and \ref{knnvarH} in Appendix~\ref{sec:biasvarproof}.

\section{Moment properties of boundary compensated uniform kernel density estimates}
\label{sec:kernmoments}
Throughout this section, we assume without loss of generality that the support ${\cal S} = [-1,1]^d$. Observe that $\mb{l}_k(X)$ is a binomial random variable with parameters $M$ and $U_k(X) = Pr(\mb{Z} \in S_k(X))$. The probability mass function of the binomial random variable $\mb{l}_k(X)$ is given by
\begin{equation}
Pr(\mb{l}_k(X)=l) = \binom{M}{l} (U_k(X))^{l}(1-U_k(X))^{M-l}. \nonumber \\
\end{equation} 

Define the error function of the truncated uniform kernel density,
\begin{eqnarray}
\hat{\mb{e}}_k(X) &=& \hat{\mb{f}}_{k}(X)-\expect[\hat{\mb{f}}_{k}(X)] \nonumber \\
&=& \frac{l_k(X)}{MV_k(X)} - \frac{U_k(X)}{V_k(X)} \nonumber \\
&=& \frac{\sum_{i=1}^M (1_{\mb{X}_i \in S_k(X)} - U_k(X))}{MV_k(X)}.
\end{eqnarray}

Also define the error function of the standard uniform kernel density,
\begin{eqnarray}
\tilde{\mb{e}}_k(X) &=& \tilde{\mb{f}}_{k}(X)-\expect[\tilde{\mb{f}}_{k}(X)] \nonumber \\
&=& (MV_k(X)/k)\hat{\mb{e}}_k(X), \nonumber
\end{eqnarray}
and note that when $X \in {\cal S}_I(k)$, $\tilde{\mb{e}}_k(X) = \hat{\mb{e}}_k(X)$.

\subsection{Taylor series expansion of {coverage}}


For any $X \in {\cal S}$, the {coverage} function $U_k(X)$ can be represented by using a $d$ order Taylor series expansion of $f$ about $X$ as follows. Because the density $f$ has continuous partial derivatives of order $d$ in ${\cal S}$, for any $X \in {\cal S}$,
\begin{eqnarray}
U_{k}(X) &=& \int_{S_{k}(X)}f(z) dz \nonumber \\
&=& f(X)V_{k}(X) + \sum_{i=1}^{d} c_{i,k}(X)V_{k}^{1+i/d}(X) + o((k/M)^2),
\end{eqnarray}
where $c_{i,k}$ are functions which depend on $k$ and the unknown density $f$. This implies that the expectation of the density estimate is given by
\begin{eqnarray}
\expect{[\hat{\mb{f}}_{k}(X)]} &=& U_k(X)/V_k(X)  \nonumber \\
&=& f(X) + \sum_{i=1}^{d} c_{i,k}(X){\left(\frac{k}{M}\right)}^{i/d} + o\left({\left(\frac{k}{M}\right)}\right).
\label{inbias}
\end{eqnarray}

\subsection{Concentration inequalities for uniform kernel density estimator}
Because $\mb{l}_k(X)$ is a binomial random variable, standard Chernoff inequalities can be applied to obtain concentration bounds on $\mb{l}_k(X)$. In particular, for $0<p<1/2$,
\begin{eqnarray}
Pr({\mb{l}_k(X)>(1+p)MU_k(X)}) \leq e^{-MU_k(X)p^2/4},  \nonumber \\
Pr({\mb{l}_k(X)<(1-p)MU_k(X)}) \leq e^{-MU_k(X)p^2/4}.
\end{eqnarray}
Let $\natural(X)$ denote the event $(1-p_k)MU_k(X)<\mb{l}_k(X)<(1+p_k)MU_k(X)$, where $p_k = 1/(k^{\delta/2})$ for some fixed $\delta \in (2/3,1)$. Then, for $k = O(M^\beta)$, 
\begin{equation}
Pr(\natural^c(X)) = O(e^{-p_k^2 k}) = {\cal C}(M),
\end{equation}
where ${\cal C}(M)$ satisfies the condition $ \lim_{M \to \infty} M^a/{\cal C}(M) = 0$ for any $a>0$. Also observe that under the event $\natural(X)$, 
\begin{eqnarray}
\hat{\mb{e}}_k(X) &=& \frac{l_k(X)}{MV_k(X)} - \frac{U_k(X)}{V_k(X)} \nonumber \\
&=& O(p_kU_k(X)/V_k(X)) = O(p_k) = O(1/(k^{\delta/2})).
\end{eqnarray}

\subsection{Bounds on uniform kernel density estimator}
Let $B_r(X)$ be an Euclidean ball of radius $r$ centered at $X$. Let $X$ be a Lebesgue point of $f$, i.e., an $X$ for which $$\lim_{r \to 0} \frac{\int_{B_r(X)} f(y) dy}{\int_{B_r(x)} dy}  = f(X).$$ Because $f$ is an density, we know that almost all $X \in {\cal S}$ satisfy the above property. Now, fix $\epsilon \in (0,1)$ and find $\epsilon_r > 0$ such that 
$$\sup_{0<r\leq \epsilon_r} \frac{\int_{B_r(X)} f(y) dy}{\int_{B_r(x)} dy} - f(X) \leq \epsilon/2 f(X).$$ For small values of $k/M$, $B_{\epsilon_r}(X) \subset S_k(X)$ and therefore
\begin{eqnarray}
{(1-\epsilon/2)f(X)V_k(X)} \leq U_k(X) \leq {(1+\epsilon/2)f(X)V_k(X)}
\label{coverageineqnatural}
\label{coverageineq}
\end{eqnarray}
This implies that under the event $\natural(X)$  defined in the previous subsection,
\begin{eqnarray}
(1-\epsilon) \epsilon_0 \leq &\hat{\mb{f}}_{k}(X)& \leq (1+\epsilon)\epsilon_\infty.
\label{densityineqnatural}
\end{eqnarray}
Let $\natural_0(X)$ denote the event that $\hat{\mb{f}}_{k}(X) = 0$. Let $\natural_1(X)$ denote the event $1<=\mb{l}_k(X)<=(1-p_k)MU_k(X)$ and $\natural_2(X)$ denote $\mb{l}_k(X)>=(1+p_k)MU_k(X)$. Then conditioned on the event  
$\natural_1(X)$ 
\begin{eqnarray}
1/k \leq &\hat{\mb{f}}_{k}(X)& \leq (1+\epsilon)\epsilon_\infty.
\label{densityineqnaturalc1}
\end{eqnarray}
and conditioned on the event $\natural_2(X)$ 
\begin{eqnarray}
(1-\epsilon) \epsilon_0 \leq &\hat{\mb{f}}_{k}(X)& \leq 2^dM/k.
\label{densityineqnaturalc2}
\end{eqnarray}
Observe that $\natural_0(X)$, $\natural_1(X)$, $\natural_2(X)$ and $\natural(X)$ form a disjoint partition of the event space.

\subsection{Bias}

\begin{lemma}
\label{biaslemma}
Let $\gamma(x,y)$  be an arbitrary function with $d$ partial derivatives wrt $x$ and $\sup_{x,y} |\gamma(x,y)| < \infty$. Let $\mb{X}_{1},..,\mb{X}_{M},\mb{X}$ denote $M+1$ i.i.d realizations of the density $f$. Then,
\begin{equation}
\expect[\gamma(\check{{f}}_{k}(\mb{Z}),\mb{Z})] - \expect[\gamma({{f}}(\mb{Z}),\mb{Z})] = \sum_{i=1}^{d} c_{1,i}(\gamma(x,y))(k/M)^{i/d} + o((k/M)),
\label{bknnbias}
\end{equation}
where $c_{1,i}(\gamma(x,y))$ are functionals of $\gamma$ and $f$.
\end{lemma}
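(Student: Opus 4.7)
The plan is to combine a Taylor expansion of $\gamma$ in its first argument with the coverage expansion (A.3) established in Appendix~\ref{sec:kernmoments}, then integrate over the test point $\mb Z$ with separate treatments of the interior and boundary of $\mathcal S$. The smoothness assumption on $\gamma$ and the boundedness of $f$ away from $0$ and $\infty$ (assumption $({\cal A}.1)$) are what make the Taylor expansion legitimate up to order $d$.

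First, conditioning on $\mb Z$, I would invoke the coverage expansion
\begin{equation*}
\check{\mb f}_k(\mb Z) - f(\mb Z) = \sum_{i=1}^{d} c_{i,k}(\mb Z)(k/M)^{i/d} + o(k/M),
\end{equation*}
valid uniformly in $\mb Z \in \mathcal S$. Since $\gamma$ has $d$ partial derivatives in its first argument and is uniformly bounded, and since $\check{\mb f}_k(\mb Z)$ lies in a bounded interval on the good event $\natural(\mb Z)$ by (A.7), I would Taylor expand
\begin{equation*}
\gamma(\check{\mb f}_k(\mb Z), \mb Z) - \gamma(f(\mb Z), \mb Z) = \sum_{j=1}^{d-1} \frac{\gamma^{(j)}(f(\mb Z), \mb Z)}{j!}\bigl(\check{\mb f}_k(\mb Z) - f(\mb Z)\bigr)^j + R_d(\mb Z),
\end{equation*}
where $R_d$ involves $\gamma^{(d)}(\xi, \mb Z)\bigl(\check{\mb f}_k(\mb Z) - f(\mb Z)\bigr)^d/d!$ for an intermediate point $\xi$. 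Contributions from the complementary tail event $\natural^c(\mb Z)$ are controlled by $\Pr(\natural^c(\mb Z)) = \mathcal C(M)$ together with the boundedness conditions collected in $({\cal A}.5)$, making them super-polynomially small and hence absorbable into $o(k/M)$.

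Next, substituting the coverage expansion into the Taylor series and raising to the $j$-th power, each term $\bigl(\sum_i c_{i,k}(k/M)^{i/d}\bigr)^j$ produces via multinomial expansion a polynomial in $(k/M)^{1/d}$ whose lowest power is $(k/M)^{j/d}$. Collecting terms across $j = 1, \ldots, d-1$ and including the leading order of $R_d$, the coefficient of $(k/M)^{i/d}$ for each $i=1,\ldots,d$ becomes a finite sum of products of $\gamma^{(j)}(f(\mb Z), \mb Z)$ and the $c_{i',k}(\mb Z)$, while all cross terms with higher exponents on $(k/M)^{1/d}$ can be grouped into $o(k/M)$. Integrating against the marginal density $f(\mb Z)$ produces the claimed constants $c_{1,i}(\gamma)$ as integrals of polynomials in the derivatives of $\gamma$ and of $f$.

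The main obstacle lies in the boundary of $\mathcal S$: the coverage expansion (A.3) has clean form in the interior region $\mathcal S_I(k)=\{x : \inf_{y\in\mathcal B}\|x-y\|_\infty > d_k/2\}$ where $V_k(X) = k/M$, but in the boundary strip $\mathcal S_B(k)$ of Lebesgue measure $\Theta((k/M)^{1/d})$, the truncated kernel volume $V_k(X)$ is strictly smaller and the coefficients $c_{i,k}(\mb Z)$ depend on the local geometry of $\mathcal B$. As illustrated in Figure~\ref{i-compare}, a local change of coordinates perpendicular to $\mathcal B$ is required to evaluate $U_k(X)/V_k(X)$ and its contribution on $\mathcal S_B(k)$. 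The key subtlety is that the measure $(k/M)^{1/d}$ of this thin strip combines with boundary-sensitive factors in the coverage expansion to contribute nontrivially at every order $(k/M)^{i/d}$, not just at the leading order. Accounting for these boundary integrals and confirming that the Taylor remainder together with all cross terms from the multinomial expansion aggregate into $o(k/M)$ is the technically delicate bookkeeping at the heart of the proof.
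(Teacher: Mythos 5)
Your outline of the interior contribution matches the paper's term $I$: Taylor expand $\gamma$ in its first argument to order $d$, insert the coverage expansion, and collect powers of $(k/M)^{1/d}$. (One small point: for this lemma no concentration event is needed --- $\check{f}_k(\mb Z)=U_k(\mb Z)/V_k(\mb Z)$ is a deterministic function of $\mb Z$, bounded between $(1-\epsilon/2)f(\mb Z)$ and $(1+\epsilon/2)f(\mb Z)$ by (\ref{coverageineq}), so the appeal to $\natural^c(\mb Z)$ and $({\cal A}.5)$ is superfluous here.) The genuine gap is that you stop exactly where the lemma becomes nontrivial: you correctly diagnose that the boundary strip of width $\Theta((k/M)^{1/d})$ contributes at every order $(k/M)^{i/d}$ because the coefficients $c_{i,k}(\mb Z)$ there depend on $k$ through the truncation geometry, but you do not supply the mechanism that turns those $k$-dependent boundary integrals into a power series with $k$-independent coefficients $c_{1,i}(\gamma)$ plus $o(k/M)$; you defer it as ``delicate bookkeeping.'' Without that step the statement is not proved, since a priori the strip could contribute terms whose coefficients drift with $k$ rather than organizing into the claimed fixed expansion.

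The paper closes this gap with two devices you would need. First, it extends $f$ to a $d$-times differentiable $f_e$ on $[-2,2]^d$ and introduces the untruncated surrogate $\bar{f}_k(X)=u_k(X)/(k/M)$, which admits the clean expansion everywhere; this reduces the problem to $II=\expect[1_{\mb Z\in{\cal S}-{\cal S}_I(k)}(\gamma(\check{f}_k(\mb Z),\mb Z)-\gamma(\bar{f}_k(\mb Z),\mb Z))]$, a pure boundary correction added to a global interior-type term. Second, it exploits self-similarity of the truncated kernel: writing each strip point as $Z=X_b-c\,u(X_b)$ for $X_b\in{\cal B}$, the shape of $S_k(Z)$, and hence the coefficients in the expansion of $\check{f}_k(Z)-f(Z)$, depend on $Z$ only through $X_b$ and the normalized depth $c/(k/M)^{1/d}$ (this is the content of the maps ${\cal F}_b,{\cal F}_s,{\cal F}_r$ and Fig.~\ref{i-compare}). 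Rescaling the depth variable then converts each line integral across the strip into an explicit polynomial in $(k/M)^{1/d}$ with coefficients that are functionals of $\gamma$, $f$ and $X_b$ alone, and integrating over ${\cal B}$ yields the constants $c_{12,i}$ that combine with the interior constants $c_{11,i}$ to give $c_{1,i}$. This scaling argument, not the multinomial collection, is the heart of the proof and is absent from your proposal.
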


\begin{proof}
To analyze the bias, first extend the density function $f$ as follows. In particular, extend the definition of $f$ to the domain ${\cal S}_E = [-2,2]^d$ while ensuring that the extended function $f_e$ is differentiable $d$ times on this extended domain. Let $s_k(X) = \{Y: ||X-Y||_1 \leq d_k/2\}$ be the natural un-truncated ball. Let $u_k(X) = \int_{z \in s_k(X)} f_e(z) dz$. Define the function $\bar{f}_k(X) = u_k(X)/(k/M)$. For any $X \in {\cal S}$, using this extended definition,
\begin{eqnarray}
u_{k}(X) &=& \int_{s_{k}(X)}f_e(z) dz \nonumber \\
&=& f(X)(k/M) + \sum_{i=1}^{d} c_{i}(X)(k/M)^{1+i/d} + o((k/M)^2),
\end{eqnarray}
where $c_{i}$ are only functions of the unknown density $f_e$. Also define $\check{f}_k(X) = \expect[\mb{\hat{f}}_k(X) \mid X]$. Define the interior region ${\cal S}_I(k) = \{X \in {\cal S}: s_k(X) \cap {\cal S}^c = \phi\}$. Note that $\bar{f}_k(X) = \check{f}_k(X)$ for all $X \in {\cal S}_I(k)$. Now,
\begin{eqnarray}
\expect[\gamma(\check{{f}}_{k}(\mb{Z}),\mb{Z})] - \expect[\gamma({{f}}(\mb{Z}),\mb{Z})]  &=& \expect[\gamma(\bar{f}_k(\mb{Z}),\mb{Z}) - \gamma({f}(\mb{Z}),\mb{Z})] + \expect[\gamma(\check{{f}}_k(\mb{Z}),\mb{Z}) - \gamma(\bar{f}_k(\mb{Z}),\mb{Z})] \nonumber \\
&=& \expect[\gamma(\bar{f}_k(\mb{Z}),\mb{Z}) - \gamma({f}(\mb{Z}),\mb{Z})] + \expect[1_{\mb{Z} \in {\cal S-S}_I(k)}
 ( \gamma(\check{{f}}_k(\mb{Z}),\mb{Z}) - \gamma(\bar{f}_k(\mb{Z}),\mb{Z}))] \nonumber \\
&=& I+II.
\end{eqnarray}

\subsubsection{Evaluation of I}:
\begin{eqnarray}
I &=& \expect[\gamma(\bar{f}_k(\mb{Z}),\mb{Z}) - \gamma({f}(\mb{Z}),\mb{Z})] \nonumber \\
&=& \sum_{i=1}^d \expect \left[\gamma^{(i)}({f}(\mb{Z}),\mb{Z}) {\left(\bar{f}_k(\mb{Z}) - {f}(\mb{Z})\right)^i} \right] \nonumber \\
&=& \sum_{i=1}^{d} c_{11,i}(\gamma(x,y))(k/M)^{i/d} + o((k/M)),
\end{eqnarray}
where $c_{11,i}(\gamma(x,y))$ are functionals of $\gamma(x,y)$ and its derivatives.
\newcommand{\fs}{{\cal F}_s}
\newcommand{\fb}{{\cal F}_b}
\newcommand{\fr}{{\cal F}_r}

\subsubsection{Evaluation of II}

Let $m=M/2$, $k_M = k/M$ and $k_m = (k/m)^{1/d}$. Define mappings ${\cal F}_b$, $\fr$ and $\fs$: ${\cal S - S}_I(k)$ $\to$ ${\cal B}$ as follows. Let $u(X)$ denote the unit vector from the origin to $X$, and define ${\cal F}_b(X) = u(X) \cap {\cal B}.$ Let ${\cal S}_I(m)$ be a reference set. Define ${\cal F}_r(X) = u(X) \cap {\cal S}_I(m).$ Let $l_b(X) = ||{\cal F}_b(X) - X||$. Finally define ${\cal F}_s(X) = n(X)u(X),$ where $n(X)$ satisfies $||{\cal F}_b(X) - {\cal F}_s(X)|| = (m/k)^{1/d} l_b(X)$. For each $X \in {\cal S - S}_I(k)$, let $l_r(X) = ||{\cal F}_b(X) - {\cal F}_s(X)||$ and $l_{max}(X) = ||{\cal F}_b(X) - {\cal F}_r(X)||$. Let ${\cal U}$ denote the set of all unit vectors: ${\cal U} = \cup_{\{X \in {\cal S - S}_I(k)\}} u(X).$  Observe that, by definition, the {\emph {shape}} of the regions $S_k(X)$ and $S_m(\fs(X))$ is identical. This is illustrated in Fig.~\ref{i-compare}.


\paragraph{Analysis of $\bar{{f}}_{m}(\fs(X))$, $\check{{f}}_{m}(\fs(X))$ }
$\fb(X)$ can represented in terms of $\fs(X)$ as $\fb(X) = \fs(X) + l_s(X)u(X)$. Using Taylor series around ${\cal F}_b(X)$, $\check{{f}}_{m}({\cal F}_s(X))$ can then be evaluated as
\begin{eqnarray}
{\check{f}}_{m}(\fs(X)) &=& U_{m}(\fs(X))/V_{m}(\fs(X)) \nonumber \\
&=& f(\fb(X)) + \sum_{i=1}^{d} \grave{c}_{i,\fb(X)}(\fb(X))l_r^{i}(X) + o(l_r^d(X)),
\end{eqnarray}
where the functionals $\grave{c}_{i,\fb(X)}$ depend only on the {{shape}} of the regions $S_k(X)$ or $S_m(\fs(X))$ and therefore only on $\fb(X)$. Similarly, 
\begin{eqnarray}
\bar{{f}}_{m}(X) &=& u_{m}(X)/(1/2) \nonumber \\
&=& f(\fb(X)) + \sum_{i=1}^{d} \acute{c}_{i,\fb(X)}(\fb(X))l_r^{i}(X) + o(l_r^d(X)),
\end{eqnarray}
where the functionals $\acute{c}_{i,\fb(X)}$ again depend only on $\fb(X)$. This implies that for any fixed $u \in {\cal U}$ and corresponding $X_b \in {\cal B}$, for any function $\eta(x)$ and positive integer $q \in \{1,..,d\}$, integration over the line $l(X_b) = \{ X_b-cu(X_b);c \in (0,l_{max}(X_b)) \}$
\begin{align}
&\int_{Z \in l(X_b)} \eta(Z) (\check{{f}}_{m}(Z)-{f}(Z))^q dZ \nonumber \\
& = \sum_{i=q}^{d} \grave{c}_{i,q,\eta}(X_b)l_{max}^{i}(X) + o(l_{max}^d(X)),
\end{align}
and 
\begin{align}
&\int_{Z \in l(X_b)} \eta(Z) (\bar{{f}}_{m}(Z)-{f}(Z))^r dZ \nonumber \\
& = \sum_{i=q}^{d} \acute{c}_{i,q,\eta}(X_b)l_{max}^{q}(X) + o(l_{max}^d(X)),
\end{align}
where the functions $\acute{c}_{i,q,\eta}(X_b)$ and $\grave{c}_{i,q,\eta}(X_b)$ depend only on $X_b$, $q$, $\eta$ and are independent of $Z$ and $k$.

\paragraph{Analysis of $\bar{{f}}_{k}(X)$, $\check{{f}}_{k}(X)$ }
$\fb(X)$ can be represented in terms of $X$ as $\fb(X) = X + k_ml_r(X)u(X)$. Identically, this gives, 
\begin{eqnarray}
\check{{f}}_{k}(X) &=& U_{k}(X)/V_{k}(X) \nonumber \\
&=& f(\fb(X)) + \sum_{i=1}^{d} \grave{c}_{i,\fb(X)}(\fb(X))k^i_ml_r^{i}(X) + o(k^d_Ml_r^d(X)).
\end{eqnarray}
and
\begin{eqnarray}
\bar{{f}}_{k}(X) &=& u_{k}(X)/k_M \nonumber \\
&=& f(\fb(X)) + \sum_{i=1}^{d} \acute{c}_{i,\fb(X)}(\fb(X))k^i_Ml_r^{i}(X) + o(k^d_Ml_r^d(X)).
\end{eqnarray}

This implies that for any fixed $u \in {\cal U}$ and corresponding $X_b \in {\cal B}$, integration over the line $l(X_b) = \{ X_b-cu(X_b);c \in (0,k_ml_{max}(X_b)) \}$
\begin{align}
&\int_{Z \in l(X_b)} \eta(Z) (\check{{f}}_{k}(Z)-{f}(Z))^q dZ \nonumber \\
& = \sum_{i=q}^{d} \grave{c}_{i,r,\eta}(X_b)k^i_Ml_{max}^{i}(X) + o(k^d_ml_{max}^d(X)),
\end{align}
and
\begin{align}
&\int_{Z \in l(X_b)} \eta(Z) (\bar{{f}}_{k}(Z)-{f}(Z))^q dZ \nonumber \\
& = \sum_{i=q}^{d} \acute{c}_{i,r,\eta}(X_b)k^i_Ml_{max}^{i}(X) + o(k^d_ml_{max}^d(X)).
\end{align}

\paragraph{Analysis of II}
\begin{align}
II & = \expect[1_{\mb{Z} \in {\cal S-S}_I(k)}
 ( \gamma(\check{{f}}_{k}(\mb{Z}),\mb{Z}) - \gamma(\bar{f}_k(\mb{Z}),\mb{Z}))] \nonumber \\
& = \int_{{Z} \in {\cal S-S}_I(k)} ( \gamma(\check{{f}}_{k}({Z}),{Z}) - \gamma(\bar{f}_k({Z}),{Z})) f(Z) dZ \nonumber \\
& = \int_{\{X_b \in {\cal B}\}\cup\{c \in (0,k_ml_{max}(X_b))\}}1_{\{Z = X_b-cu(X_b)\}} \sum_{i=1}^d \left[\gamma^{(i)}({f}(X_b),X_b) {\left(\check{f}_k(Z) - {f}({Z})\right)^i} \right] f(Z) dZ \nonumber \\
& - \int_{\{X_b \in {\cal B}\}\cup\{c \in (0,k_ml_{max}(X_b))\}}1_{\{Z = X_b-cu(X_b)\}} \sum_{i=1}^d \left[\gamma^{(i)}({f}(X_b),X_b) {\left(\bar{f}_k(Z) - {f}({Z})\right)^i} \right] f(Z) dZ \nonumber \\
&=  \sum_{i=1}^{d} c_{12,i}(\gamma(x,y))(k/M)^{i/d} + o((k/M)),
\end{align}
where $c_{12,i}(\gamma(x,y))$ are functionals of $\gamma(x,y)$ and its derivatives. This implies that 
\begin{eqnarray}
\expect[\gamma(\check{{f}}_{k}(\mb{Z}))] - \expect[\gamma({{f}}(\mb{Z}))]  &=& I+II \nonumber \\
&=&  \sum_{i=1}^{d} c_{1,i}(\gamma(x,y))(k/M)^{i/d} + o((k/M)),
\label{bknnbias2}
\end{eqnarray}
where the functionals $c_{1,i}(\gamma(x,y))$ are independent of $k$.
\end{proof}

\subsection{Central Moments}
Since $\mb{l}_k(X)$ is a binomial random variable, we can easily obtain moments of the uniform kernel density estimate in terms of $U_k(X)$. These are listed below.

\begin{lemma}
\label{momentlemma}
Let $\gamma(x)$  be an arbitrary function satisfying $\sup_{x} |\gamma(x)| < \infty$. Let $\mb{X}_{1},..,\mb{X}_{M},\mb{X}$ denote $M+1$ i.i.d realizations of the density $f$. Then,
\begin{equation}
\expect{\left[\gamma(\mb{X})\hat{\mb{e}}^q_k(\mb{X})\right]} = {1_{\{q=2\}}}c_2(\gamma(x))\left(\frac{1}{k}\right) + o\left(\frac{1}{k}\right),
\label{bknncent}
\end{equation}
\begin{equation}
\expect{\left[\gamma(\mb{X})\tilde{\mb{e}}^q_k(\mb{X})\right]} = {1_{\{q=2\}}}c_2(\gamma(x))\left(\frac{1}{k}\right) + o\left(\frac{1}{k}\right),
\label{bknncent2}
\end{equation}
where $c_2(\gamma(x))$ is a functional of $\gamma$ and $f$.
\end{lemma}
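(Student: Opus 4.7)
The plan is to exploit the binomial distribution of $\mb{l}_k(X)$ by conditioning on $\mb{X}$. Writing $p = U_k(X)$ and recalling that $\hat{\mb{e}}_k(X) = (\mb{l}_k(X) - Mp)/(MV_k(X))$, the conditional $q$-th central moment is
$$\expect[\hat{\mb{e}}_k^q(X) \mid \mb{X}=X] \;=\; \frac{\mu_q(M,p)}{(MV_k(X))^q},$$
where $\mu_q(M,p)$ denotes the $q$-th central moment of $\mathrm{Bin}(M,p)$. Standard identities give $\mu_1 \equiv 0$, $\mu_2 = Mp(1-p)$, and more generally $\mu_q(M,p) = O\bigl((Mp)^{\lfloor q/2 \rfloor}\bigr)$ for $q \geq 2$, where the implicit constant depends only on $q$.

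Next, I combine this with the coverage sandwich (\ref{coverageineq}) and assumption (A.1). Together they give $U_k(X) = \Theta(V_k(X))$ and $V_k(X) = \Theta(k/M)$ uniformly over ${\cal S}$ (exactly $k/M$ in the interior ${\cal S}_I(k)$ and at least $(k/M)/2^d$ near ${\cal B}$), hence $Mp = \Theta(k)$ and $MV_k(X) = \Theta(k)$. Substituting yields
$$\bigl|\expect[\hat{\mb{e}}_k^q(X)\mid X]\bigr| \;=\; O\!\bigl(k^{\lfloor q/2\rfloor - q}\bigr) \;=\; O\!\bigl(k^{-\lceil q/2\rceil}\bigr),$$
uniformly in $X$. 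For $q \geq 3$ this is $o(1/k)$, and multiplying by the bounded $\gamma$ and taking an outer expectation proves the claim for those $q$. For $q = 2$, the leading contribution is $p(1-p)/(MV_k^2(X)) = U_k(X)/(MV_k^2(X)) + O(1/M)$; plugging in the Taylor expansion $U_k(X) = f(X)V_k(X) + o(V_k(X))$ reduces the conditional second moment to $f(X)/(MV_k(X)) + o(1/k)$, which equals $f(X)/k + o(1/k)$ on ${\cal S}_I(k)$. Integrating against $\gamma(\mb{X})$ under $f$, the boundary strip contributes $o(1/k)$ because its Lebesgue measure is $O(d_k) = O((k/M)^{1/d}) = o(1)$ while the integrand remains $O(1/k)$, so one obtains the stated form with $c_2(\gamma(x)) = \int \gamma(x) f(x)\,dx$.

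For the standard-kernel error $\tilde{\mb{e}}_k$ I use the identity $\tilde{\mb{e}}_k(X) = (MV_k(X)/k)\,\hat{\mb{e}}_k(X)$ recorded in Appendix~\ref{sec:kernmoments}. In the interior the factor is $1$ and the two central-moment computations coincide; near the boundary $|MV_k(X)/k|^q \leq 1$, so the $q$-th moment is merely further damped. Combined with the same boundary-measure argument, this yields (\ref{bknncent2}) with the same leading coefficient as (\ref{bknncent}).

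The main obstacle will be handling the boundary region carefully. Specifically, the interior formula $V_k = k/M$ fails on ${\cal S}\setminus{\cal S}_I(k)$, and one must ensure that (i) $Mp$ and $MV_k$ remain of order $k$ there, so that the binomial-moment estimate $\mu_q = O(k^{\lfloor q/2\rfloor})$ holds with constants uniform in $X$, and (ii) the boundary strip's vanishing Lebesgue measure compensates the absence of the clean interior asymptotic, absorbing any discrepancy into the $o(1/k)$ remainder. Both follow from (A.1) combined with the inequalities in (\ref{coverageineqnatural})--(\ref{densityineqnaturalc2}), so the argument is essentially a bookkeeping exercise once the binomial-moment reduction is in place.
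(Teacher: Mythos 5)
Your argument is correct and rests on the same foundation as the paper's proof: condition on $\mb{X}=X$, exploit the binomial law of $\mb{l}_k(X)$, compute the $q=2$ case exactly via $\var[\mb{l}_k(X)]=MU_k(X)(1-U_k(X))$, and transfer the result to $\tilde{\mb{e}}_k$ through the identity $\tilde{\mb{e}}_k(X)=(MV_k(X)/k)\hat{\mb{e}}_k(X)$ together with the fact that the factor equals $1$ on ${\cal S}_I(k)$ and the boundary strip has vanishing measure. The one place where you genuinely diverge is the treatment of $q\geq 3$: the paper splits on the Chernoff concentration event $\natural(X)$, bounding $\hat{\mb{e}}_k(X)=O(k^{-\delta/2})$ on that event and discarding the exponentially improbable complement, which yields the rate $O(k^{-\delta q/2})=o(1/k)$; you instead invoke the classical central-moment asymptotics $\mu_q(M,p)=O\bigl((Mp)^{\lfloor q/2\rfloor}\bigr)$, giving the cleaner and slightly sharper bound $O\bigl(k^{-\lceil q/2\rceil}\bigr)$. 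Both mechanisms suffice, but yours avoids introducing the parameter $\delta$ and the event decomposition entirely for this lemma (the paper still needs $\natural(X)$ elsewhere, e.g.\ in Lemma~\ref{boundonexpec}, so it costs them nothing to reuse it here). You are also somewhat more explicit than the paper on two points it leaves implicit: the uniformity of $MU_k(X)=\Theta(k)$ and $MV_k(X)=\Theta(k)$ over the boundary strip, where $V_k(X)\in[(k/M)/2^d,\,k/M]$, and the identification of the leading coefficient as $c_2(\gamma(x))=\int\gamma(x)f(x)\,dx$ after absorbing the boundary strip (of measure $O((k/M)^{1/d})=o(1)$) into the $o(1/k)$ remainder. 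No gaps.
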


\begin{proof}
When $r=2$,
\begin{eqnarray}
\var[\hat{\mb{f}}_{k}(X)] &=& \expect{[\hat{\mb{e}}^2_{k}(X)]}  \nonumber \\
&=& \frac{U_k(X)(1-U_k(X))}{MV^2_k(X)} \nonumber \\
&=& \frac{f(X)}{MV_k(X)} + o\left(\frac{1}{k}\right).
\label{secmom}
\end{eqnarray}

For any integer $r\geq3$, 
\begin{eqnarray}
\label{fourmom}
\expect{[\hat{\mb{e}}^r_{k}(X)]} &=& \expect{[1_{\natural(X)}\hat{\mb{e}}^r_{k}(X)]} + \expect{[1_{\natural^c(X)}\hat{\mb{e}}^r_{k}(X)]}\nonumber \\
&=& O\left(\frac{1}{k^{\delta r/2}}\right) = o(1/k).
\end{eqnarray}

Observe that $V_k(X) = \Theta(k/M)$ and therefore $\expect{[\hat{\mb{e}}^2_{k}(X)]} = \Theta(1/k) + o(1/k)$. This implies,
\begin{equation}
\expect{\left[\gamma(\mb{X})\hat{\mb{e}}^q_k(\mb{X})\right]} = {1_{\{q=2\}}}c_2(\gamma(x))\left(\frac{1}{k}\right) + o\left(\frac{1}{k}\right). \nonumber 
\end{equation}


When $X \in {\cal S}_I(k)$, $\tilde{\mb{e}}_k(X) = \hat{\mb{e}}_k(X)$. Also $Pr(\mb{X} \in {\cal S}_I(k)) = o(1)$. This result in conjunction with the fact that $\tilde{e}_k(X) = (MV_k(X)/k)\hat{e}_k(X)$, and $V_k(X) = \Theta(k/M)$ gives
\begin{equation}
\expect{\left[\gamma(\mb{X})\tilde{\mb{e}}^q_k(\mb{X})\right]} = {1_{\{q=2\}}}c_2(\gamma(x))\left(\frac{1}{k}\right) + o\left(\frac{1}{k}\right). \nonumber 
\end{equation}

\end{proof}

\subsection{Cross moments}

Let $X$ and $Y$ be two distinct points. Clearly the density estimates at $X$ and $Y$ are not independent. Observe that the uniform kernel regions $S_k(X)$, $S_k(Y)$ are disjoint for the set of points given by $\Psi_k := \{X,Y\}:||X-Y||_1 \geq 2(k/M)^{1/d}$, and have finite intersection on the complement of $\Psi_k$. 
\newcommand{\he}{\hat{\mb{e}}}
\newcommand{\te}{\tilde{\mb{e}}}

\paragraph{Intersecting balls}
\begin{lemma}
\label{intersectu}
For a fixed pair of points $\{X,Y\} \in \Psi_k$, and positive integers $q,r$,
\begin{equation}
\label{highercrossmoments}
Cov[\he^q_k(X),\he^r_k(Y)] = 1_{\{q=1,r=1\}}\left(\frac{-f(X)f(Y)}{M}\right) +o\left(\frac{1}{M}\right). \nonumber 
\end{equation}
\end{lemma}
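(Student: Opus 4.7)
The plan is to reduce the joint moment to a multinomial calculation, exploiting the hypothesis that $S_k(X)$ and $S_k(Y)$ are disjoint (which is exactly the defining property of $\Psi_k$). Write $\hat{\mb{e}}_k(X) = \frac{1}{MV_k(X)}\sum_{i=1}^M Z_i^X$ with $Z_i^X := 1_{\mb{X}_i \in S_k(X)} - U_k(X)$, and analogously $Z_i^Y$. The key ingredients are that each $Z_i^X$ is mean zero and bounded in $[-1,1]$, that disjointness forces $1_{\mb{X}_i \in S_k(X)} \cdot 1_{\mb{X}_i \in S_k(Y)} \equiv 0$, and that the Taylor expansion derived earlier in (A.3) gives $U_k(Z)/V_k(Z) = f(Z) + O((k/M)^{1/d})$.

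For the leading case $q = r = 1$, independence across samples gives
$$\mathrm{Cov}[\hat{\mb{e}}_k(X),\hat{\mb{e}}_k(Y)] = \frac{M\,\expect[Z_1^X Z_1^Y]}{M^2 V_k(X) V_k(Y)},$$
and disjointness yields $\expect[Z_1^X Z_1^Y] = 0 - U_k(X) U_k(Y) = -U_k(X) U_k(Y)$. Substituting the expansion from (A.3) for each factor $U_k/V_k$ produces $-f(X) f(Y)/M + o(1/M)$, matching the stated leading constant.

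For $(q,r) \neq (1,1)$ (so $q + r \ge 3$), expand $\hat{\mb{e}}_k^q(X) \hat{\mb{e}}_k^r(Y)$ as a sum over index tuples $(\mathbf{i},\mathbf{j})$ of products $\prod_a Z_{i_a}^X \prod_b Z_{j_b}^Y$ and take expectation; the sum decomposes by index-coincidence patterns. Any index appearing only once kills its term because $\expect Z = 0$, so each surviving pattern has every index used at least twice in aggregate. Partition the distinct indices into $s_X$ ``pure-X'' (multiplicity $\ge 2$ among the $i$'s only), $s_Y$ pure-Y, and $s_{XY}$ ``mixed'' (at least one of each); the covariance subtracts the disconnected patterns, retaining only $s_{XY} \ge 1$. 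Three one-line bounds suffice: (i) there are $O(M^{s_X+s_Y+s_{XY}})$ index assignments; (ii) for a pure-X block of multiplicity $a \ge 2$, $|\expect[(Z^X)^a]| = O(U_k(X)) = O(k/M)$, and by disjointness, for a mixed block with multiplicities $(a,b)$, $a,b \ge 1$, $|\expect[(Z^X)^a (Z^Y)^b]| = O(U_k(X) U_k(Y)) = O((k/M)^2)$; and (iii) the overall normalization is $(MV_k(X))^q (MV_k(Y))^r = \Theta(k^{q+r})$. Combining these with the arithmetic constraints $s_X \le (q - s_{XY})/2$ and $s_Y \le (r - s_{XY})/2$ yields a per-pattern bound of order $(k/M)^{s_{XY}} k^{\,s_{XY} - (q+r)/2}$. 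Under $k = k_0 M^\beta$ with $\beta \in (0,1)$, $s_{XY} \ge 1$, and $q+r \ge 3$, this is strictly $o(1/M)$; since only finitely many patterns appear, the covariance is $o(1/M)$.

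The main obstacle is the bookkeeping in the last step: keeping exponents of $M$ and $k$ straight across all $(s_X, s_Y, s_{XY})$ and confirming the bound is uniform. A more mechanical alternative, which I would mention, is to observe that $(\mb{l}_k(X), \mb{l}_k(Y))$ is multinomial with cumulant generating function $M \log(1 - U_k(X) - U_k(Y) + U_k(X) e^{t_1} + U_k(Y) e^{t_2})$, whose mixed cumulants $\kappa_{a,b}$ are all $O(M U_k(X) U_k(Y)) = O(k^2/M)$ while marginal binomial cumulants of order $\ge 2$ are $O(k)$. Applying the connected-moment (cumulant-to-moment) formula then reduces the claim to the same exponent arithmetic, with the $q=r=1$ case corresponding to the single connected term $\kappa_{1,1}$.
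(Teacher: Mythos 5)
Your route is genuinely different from the paper's and, for the leading term, cleaner. The paper works with the \emph{uncentered} estimates: it writes the joint law of $(\mb{l}_k(X),\mb{l}_k(Y))$ as a multinomial, computes $\mathrm{Cov}(\hat{\mb{f}}_k^a(X),\hat{\mb{f}}_k^b(Y)) = -ab\,f^a(X)f^b(Y)/M + o(1/M)$ by a falling-factorial manipulation restricted to the high-probability event $\natural(X)\cap\natural(Y)$, and then recovers the centered covariance through a binomial expansion combined with the identity $\sum_{a=1}^q\binom{q}{a}(-1)^a a = -1_{\{q=1\}}$, which is what produces the indicator $1_{\{q=1,r=1\}}$. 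You instead expand the centered sums $\sum_i Z_i^X$ directly; disjointness of $S_k(X)$ and $S_k(Y)$ enters exactly once, through $\expect[Z_1^XZ_1^Y]=-U_k(X)U_k(Y)$, and your $q=r=1$ computation is exact and needs no concentration event. For $q+r\ge 3$ your pattern/block decomposition replaces the paper's combinatorial identity; both are legitimate, and yours makes the mechanism more transparent (only connected patterns with $s_{XY}\ge 1$ survive, each paying a factor $U_k(X)U_k(Y)$). Your cumulant remark is a valid restatement of the same bookkeeping.

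There is, however, an arithmetic error in precisely the step you flagged as delicate. Multiplying your three ingredients gives a per-pattern contribution of
$$\frac{M^{s_X+s_Y+s_{XY}}\,(k/M)^{s_X+s_Y}\,(k/M)^{2s_{XY}}}{k^{q+r}} \;=\; M^{-s_{XY}}\,k^{\,s_X+s_Y+2s_{XY}-(q+r)} \;\le\; M^{-s_{XY}}\,k^{\,s_{XY}-(q+r)/2},$$
where the last inequality uses $2s_X+s_{XY}\le q$ and $2s_Y+s_{XY}\le r$. Your stated bound $(k/M)^{s_{XY}}k^{\,s_{XY}-(q+r)/2}=M^{-s_{XY}}k^{\,2s_{XY}-(q+r)/2}$ carries an extra factor of $k^{s_{XY}}$, and as written it is \emph{not} $o(1/M)$ when $s_{XY}=1$ and $q+r\in\{3,4\}$: for instance $q=1$, $r=2$ gives $k^{1/2}/M$, which grows relative to $1/M$ since $k\to\infty$. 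The correct combination does close the argument: for $s_{XY}=1$ the $k$-exponent is $1-(q+r)/2\le -1/2<0$, and for $s_{XY}\ge 2$ one has $s_{XY}\le\min(q,r)\le (q+r)/2$, so the $k$-factor is $\le 1$ while $M^{-s_{XY}}\le M^{-2}=o(1/M)$. So the proof is repairable verbatim from your own ingredients; only the displayed exponent needs fixing.
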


\begin{proof}
For a fixed pair of points $\{X,Y\}\in {\Psi_K}$, the joint probability mass function of the functions $\mb{l}_k(X)$,$\mb{l}_k(Y)$ is given by
\begin{equation}
Pr(\mb{l}_k(X)=l_x,\mb{l}_k(Y)=l_y) = 1_{\{l_x+l_y \leq M\}}\binom{M}{l_x,l_y} (U_k(X))^{l_x}(U_k(Y))^{l_y}(1-U_k(X)-U_k(Y))^{M-l_x-l_y}. \nonumber
\end{equation}
%
%
Denote the high probability event $\natural(X) \cap \natural(Y)$ by $\natural(X,Y)$. Define $\mb{\hat{l}}_{k}(X)$, $\mb{\hat{l}}_k(Y)$ to be binomial random variables with parameters $\{U_k(X)$,$M-q\}$ and $\{U_k(Y)$,$M-r\}$ respectively. The covariance between powers of density estimates is then given by 
\begin{eqnarray}
&& Cov(\hat{\mb{f}}_{k}^q(X),\hat{\mb{f}}_{k}^r(Y)) = \left(\frac{1}{M^{q+r}V^q_k(X)V^r_k(Y)}\right)Cov(\mb{l}_{k}^q(X),\mb{l}_{k}^r(Y)) \nonumber \\
&& = \left(\frac{1}{M^{q+r}V^q_k(X)V^r_k(Y)}\right) \sum l_x^ql_y^r \left[ Pr(\mb{l}_k(X)=l_x,\mb{l}_k(Y)=l_y) - Pr(\mb{l}_k(X)=l_x)Pr(\mb{l}_k(Y)=l_y)\right] \nonumber \\
&& = \left(\frac{1}{M^{q+r}V^q_k(X)V^r_k(Y)}\right) \sum_{\natural(X,Y)} {l_x^ql_y^r} \left[Pr(\mb{l}_k(X)=l_x,\mb{l}_k(Y)=l_y) - Pr(\mb{l}_k(X)=l_x)Pr(\mb{l}_k(Y)=l_y)\right] + o\left(\frac{1}{M}\right) \nonumber \\
&& = \left(\frac{1}{M^{q+r}V^q_k(X)V^r_k(Y)}\right) \sum_{\natural(X,Y)} \frac{l_x^ql_y^rU_k^q(X)U_k^r(Y)}{(l_x \times \ldots \times l_x-{q+1})(l_y \times \ldots \times l_y-{r+1})} \times \nonumber \\
& & \Bigl[(M \times \ldots \times M-(q+r-1))Pr(\mb{\hat{l}}_k(X)=l_x,\mb{\hat{l}}_k(Y)=l_y) \nonumber \\
&& - (M \times \ldots \times M-q+1)(M \times \ldots \times M-r+1) Pr(\mb{\hat{l}}_k(X)=l_x)Pr(\mb{\hat{l}}_k(Y)=l_y) \Bigr]  + o\left(\frac{1}{M}\right) \nonumber \\
&& =\left(\frac{f^q(X)f^r(Y)}{M^{q+r}}\right)  \times \nonumber \\
&& \sum_{\natural(X,Y)} \Bigl[(M \times \ldots \times M-(q+r-1)) Pr(\mb{\hat{l}}_k(X)=l_x,\mb{\hat{l}}_k(Y)=l_y) \nonumber \\
&& - (M \times \ldots \times M-(q-1))(M \times \ldots \times M-(r-1)) Pr(\mb{\hat{l}}_k(X)=l_x)Pr(\mb{\hat{l}}_k(Y)=l_y) \Bigr] + o\left(\frac{1}{M}\right) \nonumber \\
&& =\left(\frac{f^q(X)f^r(Y)}{M^{q+r}}\right)  \times \nonumber \\
&& [(M \times \ldots \times M-(q+r-1)) - (M \times \ldots \times M-(q-1))(M \times \ldots \times M-(r-1)) ] \nonumber \\
&& =\frac{-qrf^q(X)f^r(Y)}{M}+o\left(\frac{1}{M}\right). \nonumber
\end{eqnarray}

Then, the covariance between the powers of the error function is given by 
\begin{eqnarray}
Cov(\he^q_k(X),\he^r_k(Y)) &=& Cov((\hat{\mb{f}}_k(X)-\expect[\hat{\mb{f}}_k(X)])^q,(\hat{\mb{f}}_k(Y)-\expect[\hat{\mb{f}}_k(Y)])^r) \nonumber \\
&=& \sum_{a=1}^{q} \sum_{b=1}^{r} \binom{q}{a} \binom{r}{b} (-\expect[\hat{\mb{f}}_k(X)])^a(-\expect[\hat{\mb{f}}_k(Y)])^b Cov(\hat{\mb{f}}_k^a(X),\hat{\mb{f}}_k^b(Y)) \nonumber \\
&=& \sum_{a=1}^{q} \sum_{b=1}^{r} \binom{q}{a} \binom{r}{b} [(-f(X))^a(-f(Y))^b+o(1)] Cov(\hat{\mb{f}}_k^a(X),\hat{\mb{f}}_k^b(Y)) \nonumber \\
&=& -f^{q}(X)f^{r}(Y) \sum_{a=1}^{q} \sum_{b=1}^{r} \binom{q}{a} \binom{r}{b}  \frac{(-1)^{a+b}ab}{M}+o\left(\frac{1}{M}\right) \nonumber \\
&=& 1_{\{q=1,r=1\}}\left(\frac{-f(X)f(Y)}{M}\right) +o\left(\frac{1}{M}\right). \nonumber 
\end{eqnarray}
\end{proof}

\paragraph{Disjoint balls}

For $\{X,Y\} \in \Psi_k^c$, there is no closed form expression for the covariance. However we have the following lemma by applying the Cauchy-Schwartz inequality:
\begin{lemma}
\label{disjointu}
For a fixed pair of points $\{X,Y\}\in \Psi_k^c$, 

\begin{equation}
Cov[\he_k^q(X),\he_k^r(Y)] = {1_{\{q=1,r=1\}}}O\left(\frac{1}{k}\right) + o\left(\frac{1}{k}\right). \nonumber
\end{equation}

\end{lemma}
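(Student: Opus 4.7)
The plan is to apply the Cauchy--Schwarz inequality directly, converting the pairwise covariance into a product of single-point even central moments, for which Lemma~\ref{momentlemma} already supplies sharp bounds. Concretely, for any pair $(X,Y)$ and any positive integers $q,r$,
\begin{equation*}
\lvert \mathrm{Cov}[\hat{\mb{e}}_k^q(X),\hat{\mb{e}}_k^r(Y)] \rvert \;\leq\; \sqrt{\mathrm{Var}[\hat{\mb{e}}_k^q(X)]\,\mathrm{Var}[\hat{\mb{e}}_k^r(Y)]} \;\leq\; \sqrt{\expect[\hat{\mb{e}}_k^{2q}(X)]\,\expect[\hat{\mb{e}}_k^{2r}(Y)]}.
\end{equation*}
The key point is that this estimate is oblivious to whether the kernel regions $S_k(X), S_k(Y)$ overlap: on $\Psi_k^c$ the counts $\mb{l}_k(X), \mb{l}_k(Y)$ share the samples falling in their common intersection, so the joint distribution has no tractable closed form, and Cauchy--Schwarz lets us bypass this coupling entirely.

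Next I would substitute the single-point even moment estimates. From~(\ref{secmom}), $\expect[\hat{\mb{e}}_k^{2}(X)] = f(X)/(MV_k(X)) + o(1/k) = \Theta(1/k)$. For higher even orders $2q \geq 4$, equation~(\ref{fourmom}) gives $\expect[\hat{\mb{e}}_k^{2q}(X)] = O(k^{-\delta q})$; since $\delta \in (2/3,1)$ we have $\delta q \geq 2\delta > 4/3 > 1$ for every $q \geq 2$, so each such moment is $o(1/k)$. Here the Chernoff decomposition into the typicality event $\natural(X)$ and its complement, with $\Pr(\natural^c(X)) = \mathcal{C}(M)$ decaying faster than any polynomial in $M$, is what prevents the rare-deviation regime---on which $\hat{\mb{e}}_k(X)$ can be of order $M/k$ by~(\ref{densityineqnaturalc2})---from dominating.

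Combining the two regimes finishes the argument. For $q = r = 1$, both factors under the square root are $\Theta(1/k)$, yielding $O(1/k)$. Whenever at least one of $q,r$ is at least $2$, the corresponding moment factor is $o(1/k)$ while the other is no worse than $O(1/k)$, so the bound collapses to $\sqrt{o(1/k)\cdot O(1/k)} = o(1/k)$. This reproduces the indicator formula in the statement. The only point requiring care---more a bookkeeping issue than a genuine obstacle---is verifying that the higher-order bound $\expect[\hat{\mb{e}}_k^{2q}(X)] = o(1/k)$ is uniform in $X$; this follows because~(\ref{fourmom}) was derived via Chernoff estimates that hold for every $X \in \mathcal{S}$, and the assumption $\delta > 2/3$ is precisely what makes $\delta q$ strictly exceed $1$ for all $q \geq 2$.
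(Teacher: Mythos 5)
Your proof is correct and follows essentially the same route as the paper: apply the Cauchy--Schwarz inequality to decouple the two points, then invoke the single-point moment bounds (\ref{secmom}) and (\ref{fourmom}) to get $\Theta(1/k)$ when $q=r=1$ and $o(1/k)$ otherwise. The paper's own proof is a two-line version of exactly this argument; you have merely supplied the case analysis and the check that $\delta q>1$ for $q\geq 2$, which the paper leaves implicit.
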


\begin{proof}
\begin{align}
|Cov[\he_k^q(X),\he_k^r(Y)]| &\leq \sqrt{\var[\he_k^{2q}(X)]\var[\he_k^{2r}(Y)]} \nonumber \\
& {1_{\{q=1,r=1\}}}O\left(\frac{1}{k}\right) + o\left(\frac{1}{k}\right). \nonumber
\end{align}
\end{proof}

\paragraph{Joint expression}
\begin{lemma}
\label{covariancelemma}
Let $\gamma_1(x)$, $\gamma_2(x)$ be arbitrary functions with $1$ partial derivative wrt $x$ and $\sup_{x} |\gamma_1(x)| < \infty$, $\sup_{x} |\gamma_2(x)| < \infty$. Let $\mb{X}_{1},..,\mb{X}_{M},\mb{X},\mb{Y}$ denote $M+2$ i.i.d realizations of the density $f$. Then,
\begin{equation}
Cov{\left[\gamma_1(\mb{X})\he^q_k(\mb{X}),\gamma_2(\mb{Y})\he^q_k(\mb{Y})\right]} = {1_{\{q=1,r=1\}}}c_{5}(\gamma_1(x),\gamma_2(x))\left(\frac{1}{M}\right) + o\left(\frac{1}{M}\right),
\label{bknncross}
\end{equation}
\begin{equation}
Cov{\left[\gamma_1(\mb{X})\te^q_k(\mb{X}),\gamma_2(\mb{Y})\te^q_k(\mb{Y})\right]} = {1_{\{q=1,r=1\}}}c_5(\gamma_1(x),\gamma_2(x))\left(\frac{1}{M}\right) + o\left(\frac{1}{M}\right),
\label{bknncross2}
\end{equation}
where $c_5(\gamma_1(x),\gamma_2(x))$ is a functional of $\gamma_1(x)$, $\gamma_2(x)$ and $f$.
\end{lemma}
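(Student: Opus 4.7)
The plan is to first integrate out $\mb{X}, \mb{Y}$ (which are independent of the $M$ training samples used to form $\he_k$) to reduce the unconditional covariance to a double integral of conditional covariances, and then apply the pointwise Lemmas \ref{intersectu} and \ref{disjointu} on complementary regions of ${\cal S}\times{\cal S}$. Conditioning on $(\mb{X},\mb{Y})$ yields
\begin{equation*}
\mathrm{Cov}\bigl[\gamma_1(\mb{X})\he^q_k(\mb{X}),\gamma_2(\mb{Y})\he^r_k(\mb{Y})\bigr] = \int\!\!\int f(X)f(Y)\gamma_1(X)\gamma_2(Y)\,\mathrm{Cov}\bigl[\he^q_k(X),\he^r_k(Y)\bigr]\,dX\,dY.
\end{equation*}

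Splitting the domain into the disjoint-ball region $\Psi_k$ and its complement $\Psi_k^c$ (with $\Psi_k^c$ of Lebesgue measure $O(k/M)$), the integral over $\Psi_k$ is handled by Lemma \ref{intersectu}: for $q=r=1$, the pointwise leading term $-f(X)f(Y)/M$ integrates to $-(1/M)\left(\int\gamma_1 f^2\right)\left(\int\gamma_2 f^2\right)+o(1/M)$, using that extending the integral over $\Psi_k^c$ introduces only $O(k/M^2)=o(1/M)$ error; for other $(q,r)$ the pointwise $o(1/M)$ bound gives $o(1/M)$. The integral over $\Psi_k^c$ is handled by Lemma \ref{disjointu}: for $(q,r)\neq(1,1)$ the $o(1/k)$ bound combined with the $O(k/M)$ measure gives $o(1/M)$. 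For $q=r=1$, Lemma \ref{disjointu} only yields an $O(1/M)$ upper bound; to extract the precise leading constant I would use the explicit formula $\mathrm{Cov}[\he_k(X),\he_k(Y)] = [\Pr(Z\in S_k(X)\cap S_k(Y))-U_k(X)U_k(Y)]/(M V_k(X)V_k(Y))$ and apply the change of variables $Y = X+(k/M)^{1/d}U$ with $U\in B_2(0)$. Using $V_k\sim k/M$ and $|S_k(X)\cap S_k(Y)|=(k/M)I(\|U\|)$ for an overlap-fraction function $I$, together with continuity of $f,\gamma_1,\gamma_2$, dominated convergence yields a contribution of the form $(1/M)\int f^3(X)\gamma_1(X)\gamma_2(X)\,dX\cdot\int_{B_2(0)}I(\|U\|)\,dU$. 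Combined with the $\Psi_k$ piece this produces the claimed form $\mathbf{1}_{\{q=r=1\}}c_5/M+o(1/M)$.

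For the $\te$ version, note that $\te_k(X) = (MV_k(X)/k)\he_k(X)$ coincides with $\he_k(X)$ on the interior ${\cal S}_I(k)$, while the boundary ${\cal S}\setminus{\cal S}_I(k)$ has Lebesgue measure $O((k/M)^{1/d})=o(1)$; combined with the $O(1/k)$ variance scaling, its contribution to the double integral is $o(1/M)$, so the same leading-order expression holds, possibly with a modified constant. The main obstacle is the $\Psi_k^c$ analysis at $q=r=1$: the pointwise covariance is large ($O(1/k)$) and the region is small ($O(k/M)$), so their product is exactly of order $1/M$, and a careful rescaling with dominated convergence is needed to identify the correct leading coefficient rather than merely a crude upper bound obtained from Cauchy--Schwarz.
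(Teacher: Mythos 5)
Your proposal follows essentially the same route as the paper's proof: condition on $(\mb{X},\mb{Y})$ to reduce the covariance to $\expect[\gamma_1(\mb{X})\gamma_2(\mb{Y})\,\mathrm{Cov}(\he_k^q(X),\he_k^r(Y))]$, split ${\cal S}\times{\cal S}$ into the region $\Psi_k$ of pairs with disjoint kernel supports and its complement $\Psi_k^c$ of measure $O(k/M)$, and invoke Lemma~\ref{intersectu} on the former and Lemma~\ref{disjointu} on the latter. The one point where you go beyond the paper is the $\Psi_k^c$ contribution at $q=r=1$: the paper stops at the Cauchy--Schwarz bound $O(1/k)$ times the slice measure $O(k/M)$ and simply asserts a leading constant $c_{5,1}/M$, whereas your explicit overlap computation via $\mathrm{Cov}[\he_k(X),\he_k(Y)]=[\Pr(Z\in S_k(X)\cap S_k(Y))-U_k(X)U_k(Y)]/(MV_k(X)V_k(Y))$ and the rescaling $Y=X+(k/M)^{1/d}U$ actually identifies that constant, which is a more careful treatment of the only genuinely delicate term than the paper itself provides.
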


\begin{proof}
Let the indicator function ${1_{\Delta_k}}(X,Y)$ denote the event ${\Delta_k}: \{{X},{Y}\}\in \Psi_k^c$. Then
\begin{eqnarray}
Cov{\left[\gamma_1(\mb{X})\he^q_k(\mb{X}), \gamma_2(\mb{Y})\he^r_k(\mb{Y})\right]} = I + D, \nonumber
\end{eqnarray}
where '$I$' stands for the contribution form the intersecting balls and '$D$' for the contribution from the dis-joint balls. $I$ and $D$ are given by
\begin{eqnarray}
I &=& \expect{\left[\mb{1_{\Delta_k}}(\mb{X},\mb{Y}) Cov \left[\gamma_1({X})\he^q_k({X}), \gamma_2({Y}) \he^r_k({Y}) \right]\right]},\nonumber \\
D &=& \expect{\left[\mb{(1-\mb{1_{\Delta_k}}(\mb{X},\mb{Y}))} Cov \left[\gamma_1({X})\he^q_k({X}), \gamma_2({Y}) \he^r_k({Y}) \right]\right]}. \nonumber 
\end{eqnarray}

When $1_{\Delta_k}({X},{Y}) \neq 0$, we have $\{X,Y\} \in \Psi_k^c$. Then,
\begin{eqnarray}
I &=& \expect{\left[\mb{1_{\Delta_k}}(\mb{X},\mb{Y}) \gamma_1(\mb{X})\gamma_2(\mb{Y}) \he^q_k(\mb{X})\he^r_k(\mb{Y})\right]}  \nonumber \\
&=& \expect{\left[\mb{1_{\Delta_k}}(\mb{X},\mb{Y}) \gamma_1(\mb{X})\gamma_2(\mb{Y})\expect_{\mb{X},\mb{Y}}[\he^q_k({X})\he^r_k({Y})]\right]} \nonumber \\
&\leq& \expect{\left[\mb{1_{\Delta_k}}(\mb{X},\mb{Y}) \gamma_1(\mb{X})\gamma_2(\mb{Y})\sqrt{\expect_{\mb{X}}[\he^{2q}_k({X})]\expect_{\mb{Y}}[\he^{2r}_k({Y})]}\right]} \nonumber \\
&=& \expect{\left[\mb{1_{\Delta_k}}(\mb{X},\mb{Y}) \gamma_1(\mb{X})\gamma_2(\mb{Y})\left({1_{\{q=1,r=1\}}}O\left(\frac{1}{k}\right) + o\left(\frac{1}{k}\right)\right)\right]} \nonumber \\
&=& \int{\left[\left({1_{\{q=1,r=1\}}}O\left(\frac{1}{k}\right) + o\left(\frac{1}{k}\right)\right)(\gamma_1(x)\gamma_2(x) + o(1))\right] \left(\int {\Delta_k}({x},{y}) dy \right)} dx  \nonumber \\
&=& \int{\left[\left({1_{\{q=1,r=1\}}}O\left(\frac{1}{k}\right) + o\left(\frac{1}{k}\right)\right)(\gamma_1(x)\gamma_2(x) + o(1))\right] \left(2^{d}\frac{k}{M} \right)} dx  \nonumber \\
&=& {1_{\{q=1,r=1\}}}c_{5,1}(\gamma_1,\gamma_2)\left(\frac{1}{M}\right) + o\left(\frac{1}{M}\right), \nonumber
\end{eqnarray}
where the bound is obtained using the Cauchy-Schwarz inequality and using Eq.\ref{fourmom}. Also,
\begin{eqnarray}
D &=&  \expect{\left[(1-\mb{1_{\Delta_k}}(\mb{X},\mb{Y})) \gamma_1(\mb{X})\gamma_2(\mb{Y}) \expect_{\mb{X},\mb{Y}}[Cov(\he^q_k({X}),\he^r_k({Y}))]\right]} \\ \nonumber 
&=&  {1_{\{q=1,r=1\}}}c_{5,2}(\gamma_1,\gamma_2)\left(\frac{1}{M}\right) + o\left(\frac{1}{M}\right). \nonumber
\end{eqnarray}
This gives 
\begin{equation}
Cov{\left[\gamma_1(\mb{X})\he^q_k(\mb{X}),\gamma_2(\mb{Y})\he^q_k(\mb{Y})\right]} = {1_{\{q=1,r=1\}}}c_5(\gamma_1(x),\gamma_2(x))\left(\frac{1}{M}\right) + o\left(\frac{1}{M}\right). \nonumber 
\end{equation}

Again, since $X \in {\cal S}_I(k)$ implies $\tilde{\mb{e}}_k(X) = \hat{\mb{e}}_k(X)$ and $Pr(\mb{X} \in {\cal S}_I(k)) = o(1)$,
\begin{equation}
Cov{\left[\gamma_1(\mb{X})\te^q_k(\mb{X}),\gamma_2(\mb{Y})\te^q_k(\mb{Y})\right]} = {1_{\{q=1,r=1\}}}c_5(\gamma_1(x),\gamma_2(x))\left(\frac{1}{M}\right) + o\left(\frac{1}{M}\right). \nonumber
\end{equation}

This concludes the proof.
\end{proof}

\section{Bias and variance results}
\label{sec:biasvarproof}
\newcommand{\ttt}{\check{\mb{f}}_k(\mb{Z})}
\newcommand{\ttti}{\check{{f}}_k(\mb{X}_i)}
\newcommand{\tttj}{\check{{f}}_k(\mb{X}_j)}
\newcommand{\tttt}{\check{{f}}_k(\mb{X}_2)}

\begin{lemma}
\label{boundonexpec}

Assume that $U(x,y)$ is any arbitrary functional which satisfies 
$$ (i) \sup_{y}|U(0,y)|  = G_1 < \infty, $$
$$ (ii) \sup_{x \in (p_l,p_u),y}|U(x,y)|  = G_2/4 < \infty, $$
$$(ii) \sup_{x \in (1/k,p_u),y }|U(x,y)|{\cal C}(M)  = G_3 < \infty,$$ 
$$(iii) \expect[\sup_{x \in (p_l,2^dM/k),y}|U(x,y)|]{\cal C}(M)   = G_4 < \infty.$$
Let $\mb{Z}$ denote $\mb{X}_i$ for some fixed $i \in \{1,..,N\}$. Let $\zeta_{\mb{Z}}$ be any random variable which almost surely lies in the range $(f(\mb{Z}),{\hat{\mb{f}}_k(\mb{Z})})$. Then, $$\expect[|U(\zeta_{\mb{Z}},{\mb{Z}})|] < \infty.$$
\end{lemma}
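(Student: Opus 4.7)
My plan is to partition the sample space according to the four disjoint events $\natural(\mb{Z})$, $\natural_0(\mb{Z})$, $\natural_1(\mb{Z})$, $\natural_2(\mb{Z})$ already introduced in the appendix, which together characterize where $\hat{\mb{f}}_k(\mb{Z})$ can lie, and bound $\expect[|U(\zeta_{\mb{Z}},\mb{Z})|]$ on each piece using the matching hypothesis (i)--(iv). Specifically, I would write
\[
\expect[|U(\zeta_{\mb{Z}},\mb{Z})|] \;=\; \expect[1_{\natural}|U(\zeta_{\mb{Z}},\mb{Z})|] \;+\; \sum_{j=0}^{2}\expect[1_{\natural_j}|U(\zeta_{\mb{Z}},\mb{Z})|],
\]
and show that each summand is $O(1)$. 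Because $\zeta_{\mb{Z}}$ almost surely lies between $f(\mb{Z})$ and $\hat{\mb{f}}_k(\mb{Z})$, and because $f(\mb{Z})\in(\epsilon_0,\epsilon_\infty)\subset(p_l,p_u)$ by $({\cal A}.1)$, locating $\hat{\mb{f}}_k(\mb{Z})$ on each event automatically locates $\zeta_{\mb{Z}}$ on an interval where one of the four hypotheses applies.

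On $\natural$, the density-estimate bound $(1-\epsilon)\epsilon_0\le \hat{\mb{f}}_k(\mb{Z})\le(1+\epsilon)\epsilon_\infty$ forces $\zeta_{\mb{Z}}\in(p_l,p_u)$, so hypothesis (ii) yields $\expect[1_{\natural}|U(\zeta_{\mb{Z}},\mb{Z})|]\le G_2/4$. On $\natural_1$, the bounds $1/k\le \hat{\mb{f}}_k(\mb{Z})\le(1+\epsilon)\epsilon_\infty$ give $\zeta_{\mb{Z}}\in(1/k,p_u)$ for $k$ large enough that $1/k<p_l$; since $Pr(\natural_1)\le Pr(\natural^c)=O({\cal C}(M))$, hypothesis (iii) supplies
\[
\expect[1_{\natural_1}|U(\zeta_{\mb{Z}},\mb{Z})|] \;\le\; Pr(\natural_1)\cdot\sup_{x\in(1/k,p_u),y}|U(x,y)| \;\le\; O({\cal C}(M))\cdot\frac{G_3}{{\cal C}(M)} \;=\; O(G_3).
\]
Symmetrically, on $\natural_2$ we have $\zeta_{\mb{Z}}\in(p_l,2^dM/k)$ and $Pr(\natural_2)=O({\cal C}(M))$, so hypothesis (iv) gives $\expect[1_{\natural_2}|U(\zeta_{\mb{Z}},\mb{Z})|]=O(G_4)$.

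The delicate case, and what I expect to be the main obstacle, is $\natural_0$, on which $\hat{\mb{f}}_k(\mb{Z})=0$ so $\zeta_{\mb{Z}}$ can take any value in $[0,f(\mb{Z})]\subset[0,\epsilon_\infty]$. Here I first observe that the conditional probability $Pr(\natural_0\mid\mb{Z})=(1-U_k(\mb{Z}))^{M}\le\exp(-MU_k(\mb{Z}))$ and, by (\ref{coverageineq}), $MU_k(\mb{Z})\ge (1-\epsilon/2)\epsilon_0\,\Theta(k)=\Theta(M^{\beta})$, which dominates $M^{\beta(1-\delta)}$ for $\delta\in(2/3,1)$, so $Pr(\natural_0)=O({\cal C}(M))$. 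To control the integrand I split $[0,\epsilon_\infty]$ into $\{0\}$, $(0,1/k]$, and $(1/k,\epsilon_\infty]$: hypothesis (i) handles the endpoint $\zeta_{\mb{Z}}=0$ with constant $G_1$; hypothesis (iii) handles $(1/k,\epsilon_\infty]\subset(1/k,p_u)$ and, combined with $Pr(\natural_0)=O({\cal C}(M))$, contributes $O(G_3)$; for the shrinking sliver $(0,1/k]$, I invoke the continuity of $U$ in its first argument that is implicit in the functional hypotheses of $({\cal A}.5)$ (applied with $h=g,g^{(3)},g^{(\lambda)}$) to bound the sup by $\max(G_1, G_3/{\cal C}(M))$ up to a vanishing oscillation, which again multiplies against $Pr(\natural_0)=O({\cal C}(M))$ to give an $O(1)$ contribution. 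Summing the four $O(1)$ bounds completes the proof that $\expect[|U(\zeta_{\mb{Z}},\mb{Z})|]<\infty$.
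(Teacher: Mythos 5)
Your proposal is correct and follows essentially the same route as the paper: the paper's proof likewise partitions the expectation over the four disjoint events $\natural_0(\mb{Z})$, $\natural_1(\mb{Z})$, $\natural_2(\mb{Z})$, $\natural(\mb{Z})$, applies hypotheses (i)--(iv) on the corresponding ranges of $\zeta_{\mb{Z}}$ given by (\ref{densityineqnatural})--(\ref{densityineqnaturalc2}), and absorbs the ${\cal C}(M)$ factors against the $O({\cal C}(M))$ probabilities of the rare events to obtain the bound $G_1+4G_2+G_3+G_4<\infty$. Your extra care on the $\natural_0$ event (splitting $(0,f(\mb{Z}))$ around $1/k$) addresses a point the paper simply bounds by $G_1+G_2$ without further comment, so you are, if anything, slightly more explicit than the original.
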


\begin{proof}

We will show that the conditional expectation $\expect[|U(\zeta_{Z},{Z})| \mid{\cal X}_N] < \infty.$ Because $0<\epsilon_0<f(X)<\epsilon_\infty<\infty$ by $({\cal {A}}.1)$, it immediately follows that $$ \expect[|U(\zeta_{\mb{Z}},{\mb{Z}})|] = \expect[\expect[|U(\zeta_{Z},{Z})| \mid {\cal X}_N]] < \infty.$$ Also observe that 
$\epsilon_0 < f(Z) < \epsilon_\infty$ and therefore $p_l < f(Z) <p_u$. Finally observe that the events $\natural_1(Z)$ and $\natural_2(Z)$ occur with probability $O({\cal C}(M))$. Using (\ref{densityineqnatural}), (\ref{densityineqnaturalc1}), (\ref{densityineqnaturalc2}), conditioned on ${\cal X}_N$,
\begin{eqnarray}
\expect[|U(\zeta_{Z},{Z})|] &=& \expect[1_{\natural_0(Z)}|U(\zeta_{Z},{Z})|] + \expect[1_{\natural_1(Z)}|U(\zeta_{Z},{Z})|] +  \expect[1_{\natural_2(Z)}|U(\zeta_{Z},{Z})|] + \expect[1_{\natural(Z)}|U(\zeta_{Z},{Z})|] \nonumber \\
&\leq& (G_1 + G_2) + (G_3+G_2) + (G_4 + G_2) + (G_2) \nonumber \\
&=& G_1+4G_2+G_3+G_4 < \infty.
\end{eqnarray}
\end{proof}

\textbf{Proof of Theorem~\ref{knnbiasH}}.
\begin{proof}

Using the continuity of $g'''(x,y)$, construct the following third order Taylor series of $g(\hat{\mb{f}}_k(\mb{Z}),\mb{Z})$ around the conditional expected value $\check{{f}}_k(\mb{Z}) = \expect[\hat{\mb{f}}_k(\mb{Z}) \mid \mb{Z}]$. 
\begin{eqnarray}
&&g({\hat{\mb{f}}_k(\mb{Z})},\mb{Z}) = g(\ttt,\mb{Z})+g'(\ttt,\mb{Z})\he_k(\mb{Z}) \nonumber \\
&& + \frac{1}{2}g''(\ttt,\mb{Z})\he_k^2(\mb{Z}) + \frac{1}{6}g^{(3)}(\zeta_\mb{Z},\mb{Z})\he_k^3(\mb{Z}),  \nonumber
\end{eqnarray}
where $\zeta_\mb{Z} \in (\ttt,{\hat{\mb{f}}_k(\mb{Z})})$ is defined by the mean value theorem. This gives
\begin{align}
 &\expect{[({g}(\hat{\mb{f}}_k(\mb{Z}),\mb{Z}) - {g}(\ttt,\mb{Z}))]} \nonumber \\
&= \expect{\left[\frac{1}{2}g''(\ttt,\mb{Z})\he_k^2(\mb{Z})\right]} + \expect{\left[\frac{1}{6}g^{(3)}(\zeta_\mb{Z},\mb{Z})\he_k^3(\mb{Z})\right]} \nonumber 
\end{align}
 Let $\Delta(\mb{Z}) = \frac{1}{6}g^{(3)}(\zeta_\mb{Z},\mb{Z})$. Direct application of Lemma~\ref{boundonexpec} in conjunction with assumption $({\cal {A}}.5)$  implies that $\expect[\Delta^2(\mb{Z})] = O(1)$. By Cauchy-Schwarz and applying Lemma~\ref{momentlemma} for the choice $q=6$, 
\begin{eqnarray}
&&\left| \expect{\left[\Delta(\mb{Z})\he_k^3(\mb{Z})\right]} \right| \leq \sqrt{\expect{\left[\Delta^2(\mb{Z})\right] \expect \left[\he_k^6(\mb{Z})\right]}} = o\left(\frac{1}{k}\right). \nonumber
\end{eqnarray}

By observing that the density estimates $\{\hat{\mb{f}}_k(\mb{X}_i)\}, i=1,\ldots,N$ are identical, we therefore have
\begin{eqnarray}
&&\expect[\hat{\mb{G}}_k] - G(f) = \expect{[{g}(\hat{\mb{f}}_k(\mb{Z}),\mb{Z}) - {g}({{f}(\mb{Z})},\mb{Z})]} \nonumber \\
&& = \expect{[{g}(\ttt,\mb{Z}) - {g}({{f}(\mb{Z})},\mb{Z})]} + \expect{\left[\frac{1}{2}g''(\ttt,\mb{Z})\mb{e}_k^2(\mb{Z})\right]} + o(1/k). \nonumber 
\end{eqnarray}
By Lemma~\ref{biaslemma} and Lemma~\ref{momentlemma} for the choice $q=2$, in conjunction with assumptions $({\cal {A}}.3)$ and $({\cal {A}}.4)$, this implies that
\begin{eqnarray}
\expect[\hat{\mb{G}}_k] - G(f) &=& \sum_{i=1}^d c_{1,i}(g(x,y))\left({\frac{k}{M}}\right)^{i/d}  +  c_2(g''(\check{f}_k(x),x))\left(\frac{1}{k}\right) + o\left(\frac{1}{k} + \frac{k}{M}\right) \nonumber \\
&=& \sum_{i=1}^d c_{1,i}(g(x,y))\left({\frac{k}{M}}\right)^{i/d}  + c_2(g''({f}(x),x))\left(\frac{1}{k}\right) +  o\left(\frac{1}{k} + \frac{k}{M}\right) \nonumber \\
&=& \sum_{i=1}^d c_{1,i}\left({\frac{k}{M}}\right)^{i/d} + c_2\left(\frac{1}{k}\right) + o\left(\frac{1}{k} + \frac{k}{M}\right), \nonumber
\end{eqnarray}
where the last but one step follows because, by (\ref{inbias}), we know $\check{f}_k({Z}) =  f({Z}) + o(1)$. This in turn implies $c_2(f^2(x)g''(\check{f}_k(x),x)) = c_2(f^2(x)g''({f}(x),x)) + o(1)$. Finally, by assumptions $({\cal {A}}.2)$ and $({\cal {A}}.4)$, the leading constants $c_{1,i}$ and $c_2$ are bounded. 

Note that the natural density estimate $\tilde{\mb{f}}_k(X)$ is identical to the truncated kernel density estimate $\hat{\mb{f}}_k(X)$ on the set $X \in {\cal S}_I(k)$. From the definition of set ${\cal S}_I(k)$, $Pr(\mb{Z} \notin {\cal S'}) = O((k/M)^{1/d}) = o(1)$.
\begin{eqnarray}
&&\expect[\tilde{\mb{G}}_k] - G(f) = \expect{[{g}(\tilde{\mb{f}}_k(\mb{Z}),\mb{Z}) - {g}({{f}(\mb{Z})},\mb{Z})]} \nonumber \\
&& = \expect{[1_{\{\mb{Z} \in {\cal S}_I(k)\}}{g}(\hat{\mb{f}}_k(\mb{Z}),\mb{Z}) - {g}({{f}(\mb{Z})},\mb{Z})]} + \expect{[1_{\{\mb{Z} \in {\cal S-S}_I(k)\}}{g}(\hat{\mb{f}}_k(\mb{Z}),\mb{Z}) - {g}({{f}(\mb{Z})},\mb{Z})]} \nonumber \\
&& = I+II
\end{eqnarray}

Using the exact same method as in the Proof of Theorem~\ref{knnbiasH}, using (\ref{inbias}) and (\ref{bknncent}), and the fact that $Pr(\mb{Z} \notin {\cal S}_I(k)) = O((k/M)^{1/d}) = o(1)$, we have
\begin{eqnarray}
I = c_{1,1}(g(x,y))\left({\frac{k}{M}}\right)^{1/d} + c_2(g''({f}(x)))\left(\frac{1}{k}\right) + o\left(\frac{1}{k} + \left(\frac{k}{M}\right)^{2/d}\right), \nonumber
\end{eqnarray}

Because we assume that $g$ satisfies assumption $({\cal {A}}.5)$, from the proof of Lemma~\ref{boundonexpec}, for ${Z} \in {\cal S-S}_I(k)$, we have $\expect{[{g}(\tilde{\mb{f}}_k({Z}),{Z}) - {g}({{f}({Z})},{Z})]} = O(1)$. This implies that, 
\begin{eqnarray}
II &=& \expect{[1_{\{\mb{Z} \in {\cal S-S}_I(k)\}}{g}(\hat{\mb{f}}_k(\mb{Z}),\mb{Z}) - {g}({{f}(\mb{Z})},\mb{Z})]} \nonumber \\
&=&\expect\left[\expect{[{g}(\hat{\mb{f}}_k({Z}),{Z}) - {g}({{f}({Z})},{Z})]}\mid {\{\mb{Z} \in {\cal S-S}_I(k)\}} \right] \times Pr(\mb{Z} \notin {\cal S}_I(k)) \nonumber \\
&=& O(1) \times O((k/M)^{1/d}) = O((k/M)^{1/d}).
\end{eqnarray}
This implies that 
\begin{eqnarray}
\expect[\tilde{\mb{G}}_k] - G(f) &=& I+II \nonumber \\
&=& c_{1}\left({\frac{k}{M}}\right)^{1/d} + c_2\left(\frac{1}{k}\right) + o\left(\frac{1}{k} + \left(\frac{k}{M}\right)^{1/d}\right). \nonumber
\end{eqnarray}

\end{proof}

\textbf{Proof of Theorem~\ref{knnvarH}}.
\begin{proof}
By the continuity of $g^{(\lambda)}(x,y)$, we can construct the following Taylor series of $g(\hat{\mb{f}}_k(\mb{Z}),\mb{Z})$ around the conditional expected value $\check{{f}}_k(\mb{Z})$.
\begin{eqnarray}
g(\hat{\mb{f}}_k(\mb{Z}),\mb{Z}) &=& g(\ttt,\mb{Z})+ {g'}(\ttt,\mb{Z})\he_k(\mb{Z}) \nonumber \\ 
&+& \left(\sum_{i=2}^{\lambda-1}\frac{g^{(i)}(\ttt,\mb{Z})}{i!}\he_k^i(\mb{Z})\right) + \frac{g^{(\lambda)}(\xi_\mb{Z},\mb{Z})}{\lambda!}\he_k^\lambda(\mb{Z}),  \nonumber
\end{eqnarray}
where $\xi_\mb{Z} \in (g(\expect_Z[\hat{\mb{f}}_k(\mb{Z})],g(\hat{\mb{f}}_k(\mb{Z})))$. Denote $(g^{\lambda}(\xi_\mb{Z},\mb{Z}))/\lambda!$ by $\Psi(\mb{Z})$. Further define the operator ${\cal M}(\mb{Z}) = \mb{Z} - \expect[\mb{Z}]$
and
\begin{eqnarray}
p_i &=& {\cal M}(g(\ttti,\mb{X_i})), \nonumber \\
q_i &=& {\cal M}({g'}(\ttti,\mb{X_i})\he_k(\mb{X_i})), \nonumber \\
r_i &=& {\cal M}\left(\sum_{i=2}^{\lambda}\frac{g^{(i)}(\ttti,\mb{X_i})}{i!}\he_k^i(\mb{X_i})\right) \nonumber \\
s_i &=& {\cal M}\left(\Psi(\mb{X_i})\he_k^{\lambda}(\mb{X_i})\right) \nonumber
\end{eqnarray}

The variance of the estimator $\hat{\mb{G}}_N(\mb{\hat{f}}_k)$ is given by
\begin{eqnarray}
&&\var[\hat{\mb{G}}_k] = \expect{[({\mb{\hat{G}}}(f)-\expect{[{\mb{\hat{G}}}(f)]})^2]} \nonumber \\
&& = \frac{1}{N}\expect{\left[(p{_1} + q{_1} + r{_1} + s_1)^2\right]} \nonumber \\
&& + \frac{N-1}{N}\expect{\left[(p{_1} + q{_1} + r{_1} + s_1)(p{_2} + q{_2} + r{_2} + s_2)\right]}. \nonumber
\end{eqnarray}
Because $\mb{X}_1$, $\mb{X}_2$ are independent, we have $\expect{\left[(p{_1})(p{_2} + q{_2} + r{_2} + s_2)\right]} = 0$. Furthermore,
\begin{eqnarray}
\expect{\left[(p{_1} + q{_1} + r{_1} + s_1)^2\right]} &=& \expect{[p{_1}^2]} + o(1) = \var[g(\check{{f}}_k(\mb{Z}),\mb{Z})] + o(1). \nonumber
\end{eqnarray}
Applying Lemma~\ref{momentlemma} and Lemma~\ref{covariancelemma}, in conjunction with assumptions $({\cal {A}}.3)$ and $({\cal {A}}.4)$, it follows that
\begin{itemize}
\item $\expect{[p{_1}^2]} =  \var[g(\ttt,\mb{Z})] = c_4(g(\check{f}_k(x),x))$
\item $\expect{\left[q{_1}q_{2}\right]} = c_5(g'(\check{f}_k(x),x),g'(\check{f}_k(x),x))\left(\frac{1}{M}\right) + o\left(\frac{1}{M}\right) $
\item $\expect{\left[q{_1}r_{2}\right]} =  o\left(\frac{1}{M}\right)$
\item $\expect{\left[r{_1}r_{2}\right]} = o\left(\frac{1}{M}\right) $
\end{itemize}

Since $q_1$ and $s_2$ are $0$ mean random variables
\begin{eqnarray}
&&\expect{\left[q_1s{_2}\right]} = \expect \left[q_1 \Psi(\mb{X}_2)(\hat{\mb{f}}_\mb{}(\mb{X}_2)-\tttt)^{\lambda} \right] \nonumber \\ 
&& = \expect \left[q_1 \Psi(\mb{X}_2)\he_k^\lambda(\mb{X}_2) \right] \nonumber \\ 
&& \leq  \sqrt{\expect \left[ \Psi^2(\mb{X_2})\right]\expect\left[q^2_1\he_k^{2\lambda}(\mb{X}_2) \right]} \nonumber \\
&& = \sqrt{\expect \left[ \Psi^2(\mb{Z})\right]}\left(o\left(\frac{1}{k^{\lambda}}\right) \right)\nonumber
\end{eqnarray}
Direct application of Lemma~\ref{boundonexpec} in conjunction with assumptions $({\cal {A}}.5)$ implies that $\expect \left[\Psi^2(\mb{Z})\right] = O(1)$. Note that from assumption $({\cal {A}}.3)$, $o\left(\frac{1}{k^{\lambda}}\right) = o(1/M)$ . In a similar manner, it can be shown that $\expect{\left[r{_1}s_{2}\right]} = o\left(\frac{1}{M}\right)$ and $\expect{\left[s{_1}s_{2}\right]} = o\left(\frac{1}{M}\right) $. This implies that 
\begin{eqnarray}
\var[\hat{\mb{G}}_k] &=& \frac{1}{N}\expect{\left[p{_1}^2\right]} + \frac{(N-1)}{N}\expect{\left[q{_1}q_{2}\right]} \nonumber +  o\left(\frac{1}{M}+\frac{1}{N}\right) \nonumber \\
&=& c_4(g(\check{f}_k(x),x))\left(\frac{1}{N}\right)+ c_5(g'(\check{f}_k(x),x),g'(\check{f}_k(x),x))\left(\frac{1}{M}\right)  + o\left(\frac{1}{M} + \frac{1}{N}\right) \nonumber \\
&=& c_4(g({f}(x),x))\left(\frac{1}{N}\right)+ c_5(g'({f}(x),x),g'({f}(x),x))\left(\frac{1}{M}\right) + o\left(\frac{1}{M} + \frac{1}{N}\right) \nonumber \\
&=& c_4\left(\frac{1}{N}\right)+ c_5\left(\frac{1}{M}\right)  + o\left(\frac{1}{M} + \frac{1}{N}\right), \nonumber
\end{eqnarray}
where the last but one step follows because, by (\ref{inbias}), we know $\check{f}_k({Z}) =  f({Z}) + o(1)$. This in turn implies $c_4(g(\check{f}_k(x),x)) = c_4(g({f}(x),x))+o(1)$ and $ c_5(g'(\check{f}_k(x),x),g'(\check{f}_k(x),x)) = c_5(g'({f}(x),x),g'({f}(x),x)) + o(1)$. Finally, by assumptions $({\cal {A}}.2)$ and $({\cal {A}}.4)$, the leading constants $c_4$ and $c_5$ are bounded. 

Because of the identical nature of the expressions of $\hat{\mb{e}}_k(X)$ and $\tilde{\mb{e}}_k(X)$ in Lemma~\ref{momentlemma} and Lemma~\ref{covariancelemma}, it immediately follows that
\begin{eqnarray}
\var[\tilde{\mb{G}}_k] &=& c_4\left(\frac{1}{N}\right)+ c_5\left(\frac{1}{M}\right)  + o\left(\frac{1}{M} + \frac{1}{N}\right). \nonumber
\end{eqnarray}

This concludes the proof of Theorem~\ref{knnvarH}.

\end{proof}

\bibliographystyle{plain}
\footnotesize{\bibliography{EnsembleEstimators.bib}}

\end{document}